\def\timestring{\begingroup
	\count0 = \time \divide\count0 by 60
	\count2 = \count0	
	\count4 = \time \multiply\count0 by 60
	\advance\count4 by -\count0 	
	\ifnum\count4<10 	\toks1 = {0}
	\else				\toks1 = {}
	\fi
	\ifnum\count2<12 	\toks0 = {a.m.}
	\else				\toks0 = {p.m.}
		\advance\count2 by -12
	\fi
	\ifnum\count2=0 \count2 = 12 \fi
	\number\count2:\the\toks1 \number\count4 \thinspace \the\toks0  
\endgroup}
\numberwithin{equation}{section}	
\theoremstyle{plain}
\newtheorem{theorem}{Theorem}[section]      
\newtheorem{lemma}[theorem]{Lemma}
\newtheorem{proposition}[theorem]{Proposition}
\newtheorem*{theorem*}{Theorem}
\newtheorem*{facts*}{Facts}
\theoremstyle{definition}
\newtheorem{definition}[theorem]{Definition}
\theoremstyle{remark}
\newtheorem{remark}[theorem]{Remark}
\def\geqsl{\geqslant}
\newcommand{\trace}[1]{\ensuremath{\operatorname{tr}{#1}}}
\newcommand{\tp}[1]{\ensuremath{\hspace{1truept}\vphantom{{#1}}^{t}\hspace{-1.8truept}#1}}
\newcommand{\Mat}[1]{\ensuremath{\operatorname{Mat}_{#1}}}
\newcommand{\diag}{\ensuremath{\operatorname{diag}}}
\newcommand{\Lie}[1]{\ensuremath{\operatorname{Lie}{#1}}}
\newcommand{\Ad}{\ensuremath{\operatorname{Ad}}}  
\newcommand{\R}{\ensuremath{\mathbb R}}
\newcommand{\C}{\ensuremath{\mathbb C}}
\newcommand{\g}{\ensuremath{\mathfrak{g}}}   
\newcommand{\q}{\ensuremath{\mathfrak{q}}}   
\renewcommand{\u}{\ensuremath{\mathfrak{u}}}
\newcommand{\GL}[1]{\ensuremath{\mathrm{GL}_{#1}}}  
\newcommand{\SL}[1]{\ensuremath{\mathrm{SL}_{#1}}}  
\newcommand{\U}{\ensuremath{\mathrm{U}}}            
\newcommand{\SU}{\ensuremath{\mathrm{SU}}}          
\newcommand{\Sp}{\ensuremath{\mathrm{Sp}}}          
\newcommand{\Gc}{\ensuremath{G_{\! c}}}
\DeclareMathOperator{\re}{Re}		
\DeclareMathOperator{\im}{Im}		
\newcommand{\rmd}{\ensuremath{\operatorname{d}\!}}
\newcommand{\pd}{\ensuremath{\partial}}
\newcommand{\norm}[1]{\ensuremath{\left\|#1\right\|}}
\def\<{\langle}
\def\>{\rangle}
\renewcommand{\vec}[1]{\ensuremath{\boldsymbol #1}}
\def\c2vec#1#2{ %
   \left[ \begin{smallmatrix} %
           #1 \\ #2  \end{smallmatrix} %
   \right]}
\DeclareMathOperator{\End}{End}
\newcommand{\ai}{\ensuremath{\sqrt{-1}}}
\renewcommand{\rmd}{\ensuremath{\mathrm{d}}}
\newcommand{\del}{\ensuremath{\pd}}
\newcommand{\antidel}{\ensuremath{\overline{\pd}}}
\newcommand{\abs}[1]{\ensuremath{\left|\, #1 \, \right|}}
\renewcommand{\tp}[1]{\ensuremath{\hspace{1truept}\vphantom{{#1}}^{t}\hspace{-1.6truept}#1}}
\newcommand{\CP}{\ensuremath{ {\mathbb{CP}} }}
\renewcommand{\Gc}{\ensuremath{G^c}}
\newcommand{\Gu}{\ensuremath{G}}
\newcommand{\ourZ}{\ensuremath{\mathit{Z}}}
\newcommand{\ourLSmfd}{\ensuremath{\mathscr{L}}}
\newcommand{\ourorbit}{\ensuremath{\mathscr O}^c_{\!\lambda}}		
\newcommand{\Tc}{\ensuremath{T^{\C}}} 
\begin{document}

\title[A hyperk\"ahler metric on twisted cotangent bundle of $\CP^n$]
{A hyperk\"ahler metric on twisted cotangent bundles of the complex projective space}

\author{Takashi Hashimoto}
%
\address{
	Center for Data Science Education, 
	Tottori University, 
	4-101, Koyama-Minami, Tottori, 680-8550, Japan.
}
\email{thashi@tottori-u.ac.jp}
\date{\today,\timestring}
\keywords{%
hyperk{\"a}hler metric, Ricci-flat metric, twisted cotangent bundle, complex coadjoint orbit, moment map%
}
\subjclass[2020]{Primary 53C26}

\begin{abstract}
We construct a hyperk\"ahler metric on twisted cotangent bundles of the complex projective space $\CP^n$ explicitly in terms of local coordinates.
Note that the twisted cotangent bundles of $\CP^n$ are holomorphically isomorphic to complex semisimple coadjoint orbits of $\SL{n+1}(\C)$, as we showed in \cite{equiv_sympl}.
\end{abstract}

\maketitle

\section{Introduction}

It is well-known that complex coadjoint orbits possess a hyperk\"ahler structure (see e.g.~\cite{Hitchin92_bourbaki} \cite{Kronheimer90}).
In fact, Kronheimer constructed a hyperk\"ahler structure on complex semisimple coadjoint orbits in \cite{Kronheimer90} via identification of the orbits with moduli spaces of instantons on $\R^4$ whose quaternionic structure induces the hyperk\"ahler structure thereof.
Then, in \cite{BiquardGauduchon96}, Biquard and Gauduchon gave an explicit formula for the potential of the hyperk\"ahler metric on the complexification of a compact Hermitian symmetric orbit.

The aim of this paper is to provide a direct construction of a hyperk\"ahler metric on twisted cotangent bundles of the complex projective space $\CP^n$ in terms of local coordinates.
We note that our twisted cotangent bundles of $\CP^n$ are holomorphically isomorphic to complex semisimple coadjoint orbits of $\SL{n+1}(\C)$ (see \cite{equiv_sympl}).

To be more precise, let us consider the coadjoint action of a linear connected complex reductive Lie group $\Gc$ on the dual $\g^*$ of its Lie algebra $\g:=\Lie(\Gc)$.
Then, $\Gc$-orbits of semisimple elements $\lambda \in \g^*$ under the coadjoint action, denoted by $\ourorbit$ in this paper, are holomorphically isomorphic to twisted cotangent bundles of flag varieties $\Gc / Q$, which are affine bundles on $\Gc / Q$, where $Q$ is a parabolic subgroup of $\Gc$ whose Levi factor coincides with the isotropy subgroup of $\lambda$ (see \cite{Donaldson02_holomorphic_discs} \cite{equiv_sympl}).
Furthermore, the isomorphisms are given by the moment maps with respect to the Kirillov-Kostant-Souriau form on $\ourorbit$ and the holomorphic symplectic form on the twisted cotangent bundles inherited from the canonical one on the cotangent bundles.
Note that the twisted cotangent bundles are locally isomorphic to the cotangent bundles, and that if $\lambda$ tends to $0$ then the twisted cotangent bundles reduce to the cotangent bundles.
Now, in this paper, we introduce local coordinates via the holomorphic isomorphism given by the moment map in the case where $\Gc=\SL{n+1}(\C)$ and $Q$ is its maximal subgroup such that $\Gc / Q$ is isomorphic to $\CP^n$, 
and construct a hyperk\"ahler metric on the twisted cotangent bundle of $\CP^n$ explicitly in terms of local coordinates (see below for more details about our setting).

Our strategy is as follows.
First, we seek for Ricci-flat metrics since hyperk\"ahler manifolds are Ricci-flat.
Just as Eguchi and Hanson obtained the Eguchi-Hanson metric by solving an ordinary differential equation (ODE) in \cite{EguchiHanson79} (see also \cite{Calabi79}), 
we shall reduce the Ricci-flatness condition to a first-order ODE, whose independent variable is a $\Gu$-invariant function described with the holomorphic moment map mentioned above, where $\Gu = \SU(n+1)$, the compact real form of $\Gc=\SL{n+1}(\C)$;
we solve the ODE explicitly.
Next, we find a hyperk\"ahler metric among the Ricci-flat ones obtained in the first step.
We will see that there is one and only one hyperk\"ahler metric among them.

The plan of this paper is as follows.
In the rest of this section, we describe our setup in detail, and fix the notations.
In Section 2, 
we recall the construction of twisted cotangent bundles of $\Gc / Q$ and the isomorphisms between them and the complex coadjoint orbits $\ourorbit$ given by moment maps.
We remark that the parabolic subgroup $Q$ in this paper is maximal, though the construction of the twisted cotangent bundles can be applied to the cases where parabolic subgroups are not necessarily maximal.
Then we introduce the $\Gu$-invariant function, which we denote by $\tau$; it is defined to be the norm squared of the holomorphic moment map translated by a constant.
In Section 3, 
we find Ricci-flat K\"ahler metrics on the twisted cotangent bundle.
If we assume that the primitive $1$-form of the associated fundamental $2$-form is described in terms of $\tau$ as in \eqref{e:one-form} below, then we will see that the Ricci-flatness condition reduces to a first-order ODE (Theorem \ref{t:ricci-flat}), as mentioned above.
In Section 4,
we shall show that the hyperk\"ahlerian condition is equivalent to the fact that the Hermitian matrix associated with the metric belongs to the complex symplectic group (Proposition \ref{p:J^2 and K^2}), and we pick a hyperk\"ahler metric out of the Ricci-flat metrics obtained in Section 3, which is our main result (Theorem \ref{t:hk_sol}).
Finally, in Appendix, we collect a few formulas from linear algebra, mainly about matrices of rank 2.
%
%

\subsection{}

Now we introduce our setup.

Throughout the paper, we let $\Gu:=\SU(n+1)$ and $\Gc:=\SL{n+1}(\C)$ and denote their Lie algebras by $\g_0:=\mathfrak{su}(n+1)$ and $\g:=\mathfrak{sl}_{n+1}(\C)$ for brevity.
We often identify the dual space $\g^{*}$ with $\g$ by the trace form $B$:
\begin{equation}
\label{e:isom_g_and_gstar}
	B(X,Y) = \trace{(X Y)} \quad (X,Y \in \g).
\end{equation}
Note that the form $B$ is negative definite on $\g_{0}=\mathfrak{su}(n+1)$.
Let $Q$ be a maximal parabolic subgroup of $\Gc$ given by
\[
	Q =\left\{ \begin{bmatrix} a & 0 \\ c & d \end{bmatrix} \in \Gc ; a \in \GL{n}(\C), c \in \Mat{1 \times n}(\C), d \in \C^{\times} \right\},
\]
and $\mathfrak q$ the Lie algebra of $Q$ given by
\[
	\mathfrak q = \left\{ \begin{bmatrix} a & 0 \\ c & d \end{bmatrix} \in \g ; a \in \Mat{n \times n}(\C), c \in \Mat{1 \times n}(\C), d \in \C \right\}.
\]
Thus $\Gc/Q$ is isomprphic to the comlex projective space 
\[
	\CP^n=\big\{ [\zeta_0: \zeta_1: \dots :\zeta_n]; \tp{(\zeta_0,\zeta_1,\dots,\zeta_n)} \in \C^{n+1} \backslash \{0\} \big\}
\]
by $g.e_{Q} \leftrightarrow [g.e_{n}]$, where $e_{Q}$ is the origin of $\Gc/Q$ and $e_{n}=\tp{(0,\dots,0,1)}$.

Let $\lambda \in \g^{*}$ be a character (i.e.~one-dimensional representation) of $\mathfrak q$ which sends 
\( \left[\begin{smallmatrix} a & 0 \\ c & d \end{smallmatrix}\right] \) to $-s d$ 
for $s \in \C$,
and $\lambda^{\vee}$ the corresponding element of $\g$ under the identification $\g^{*} \simeq \g$ via the bilinear form $B$ given by \eqref{e:isom_g_and_gstar}:  
\[
	\lambda^{\vee} = \frac{s}{n+1} \begin{bmatrix} 1_{n} & 0 \\[2pt] 0 & - n \end{bmatrix} \in \g. 
\]
The $\Gc$-orbit $\ourorbit:=\Gc \cdot \lambda$ of $\lambda$ under the coadjoint action is canonically isomorphic to $\Gc / L$, where
\begin{equation}
	L = \left\{ \begin{bmatrix} a & 0 \\ 0 & d \end{bmatrix} \in \Gc ; a \in \GL{n}(\C), d \in \C^{\times} \right\}
\end{equation}
is the Levi factor of $Q$.
Let $\mathfrak l$ be the Lie algebra of $L$, $\u^{-}$ the nilradical of $\q$, and $\u$ the opposite of $\u^{-}$.
Then we have the following decompositions:
\begin{equation}
\label{e:triangular_decomp}
	\q = \mathfrak l \oplus \u^{-}   \quad \text{and} \quad  \g = \u \oplus \mathfrak l \oplus \u^{-}.
\end{equation}
Note that $\u$ and $\u^{-}$ are abelian and isomorphic to $\C^{n}$. 
Denoting the analytic subgroups of $\u^{-}$ and $\u$ by $U^{-}$ and $U$ respectively, we see that
\begin{equation}
	Q = L U^{-} = U^{-} L,
\end{equation}
and that $\Gc$ has an open covering: %
\begin{equation}
\begin{aligned}
	\Gc &= \bigcup_{\sigma \in W/W_{\lambda}} \sigma U Q, 
\end{aligned}
\end{equation}
where $W$ is the Weyl group of $\Gc$ and $W_{\lambda}$ the isotropy subgroup of $\lambda$ in $W$. 
We fix a representative $\dot \sigma \in \Gu$ of each element $\sigma \in W/W_{\lambda}$ once and for all, and identify $\sigma$ with $\dot \sigma$.
In fact, as the representatives, we can take the signed permutation matrices of determinant one that correspond to cyclic permutations $(\alpha,\alpha+1,\dots,n)$,
which we denote by $\dot \sigma_{\alpha}$ for $\alpha=0,1,\dots,n$.

We parametrize elements of $U$ and $U^-$ as
\begin{equation}
	u_z = \begin{bmatrix} 1 & z \\ 0 & 1 \end{bmatrix} \in U
	\quad	\text{and}	\quad
	u_w^- = \begin{bmatrix} 1 & 0 \\ w & 1 \end{bmatrix} \in U^-,
\end{equation}
where $z \in \C^{n}=\Mat{n \times 1}(\C)$ (column vector) and $w \in (\C^{n})^{*}=\Mat{1 \times n}(\C)$ (row vector).

\section{Twisted Cotangent Bundle}

As we mentioned earlier, the $\Gc$-orbit $\ourorbit$ is holomorphically isomorphic to the twisted cotangent bundle of $\CP^n$, which is an affine bundle on $\CP^n$ and is denoted by $(T^* \CP^n)_\lambda$.
In this section, we first recall its construction (see \cite{Donaldson02_holomorphic_discs} \cite{equiv_sympl} for details), and then introduce a $G$-invariant function on $(T^* \CP^n)_\lambda$.

Let $\{ U_\alpha \}_{\alpha=0,1,\dots,n}$ be the standard open covering of $\CP^n$, i.e. 
\begin{align*}
	U_\alpha &= \big \{ [ \zeta_{0}:\zeta_{1}: \dots : \zeta_{n} ] \in \CP^n ; \zeta_{\alpha} \ne 0 \big \}  
\end{align*}
and $\varphi_\alpha : U_\alpha \xrightarrow{\sim} \C^n$ the holomorphic local chart induced from the homeomorphism $U_\alpha \simeq \sigma_\alpha U Q/Q$.
Introducing holomorphic coordinates $(z_\alpha,\xi_\alpha) = (z_{\alpha, 1}, \dots, z_{\alpha, n}, \xi_{\alpha, 1}, \dots, \xi_{\alpha, n} )$ on the trivial cotangent bundle $T^* U_\alpha$,
we denote a point of $T^* U_\alpha$ by 
\[
	\xi_\alpha\, \rmd z_\alpha := \sum_{i=1}^{n} \xi_{\alpha, i}\, \rmd z_{\alpha, i}.
\]
In our setting, it is natural to regard $z_{\alpha}$ as a column vector $\tp{(z_{\alpha,1},\dots,z_{\alpha,n})}$ and $\xi_{\alpha}$ as a row vector $(\xi_{\alpha,1},\dots,\xi_{\alpha,n})$.

Now we glue the cotangent bundles $\{ T^* U_\alpha \}_{\alpha=0,1,\dots,n}$ on the double intersection $U_\alpha \cap U_\beta$ by an equivalence relation:
\begin{equation}
	(T^* \CP^n)_\lambda := \bigsqcup_{\alpha = 0}^{n} T^* U_\alpha / \sim,
\end{equation}
where a point $\xi_\beta\, \rmd z_\beta$ in $T^* U_\beta|_{U_\alpha \cap U_\beta}$ is defined to be equivalent to 
a point $\xi_\alpha\, \rmd z_\alpha$ in $T^* U_\alpha|_{U_\alpha \cap U_\beta}$ if and only if the following two conditions hold:
\begin{align}
	z_\beta &= ( \varphi_\beta \circ \varphi_\alpha^{-1}) (z_\alpha),
		\label{e:z_alpha and z_beta}
	\\
	\xi_\beta\, \rmd z_\beta &= \xi_\alpha\,\rmd z_\alpha - \<\lambda,\rmd t_{\alpha \beta} {t_{\alpha \beta}}^{-1}\>.
		\label{e:xi_alpha and xi_beta}
\end{align}
Here $\<\cdot,\cdot\>$ denotes the pairing between $\g^*$ and $\g$, and $t_{\alpha \beta}$ is a unique element of $L$ determined by the relation  
\begin{equation}
\label{e:xi_alpha and xi_beta 2}
	\sigma_{\beta}\, u_{z_\beta} u^{-}_{w_\beta}. e_L = \sigma_{\alpha}\, u_{z_\alpha} u^{-}_{w_\alpha}. e_L,
\end{equation}
where $e_{L}$ denotes the origin of $\Gc / L$, and $w_\alpha,\, w_\beta \in (\C^n)^*$ are related to $\xi_\alpha, \, \xi_\beta \in (\C^n)^*$ by $\xi_{\alpha} = -s w_{\alpha},\,\xi_{\beta} = -s w_{\beta}$.
In fact, Eq.~\eqref{e:xi_alpha and xi_beta 2} implies a unique existence of $t_{\alpha \beta} \in L$ that satisfies 
$\sigma_{\beta} u_{z_{\beta}} u^{-}_{w_{\beta}} = \sigma_{\alpha} u_{z_{\alpha}} u^{-}_{w_{\alpha}} t_{\alpha \beta}$ under the condition \eqref{e:z_alpha and z_beta}.

We denote the canonical projection by $\pi_\lambda$:
\begin{equation}
	\pi_\lambda: (T^* \CP^n)_{\lambda} \to \CP^n.
\end{equation}
By definition, $\pi_\lambda^{-1}(U_\alpha) = T^* U_\alpha$, $\alpha=0,1,\dots, n$.

In what follows, we will denote the twisted cotangent bundle by $\ourZ$ for brevity:
\begin{equation}
\label{e:twd ctg bdle for short}
	\ourZ := (T^* \CP^n)_{\lambda}.
\end{equation}
Since the second term in the right-hand side of \eqref{e:xi_alpha and xi_beta} is exact, taking the exterior derivative of its both sides, one can equip $\ourZ$ with a holomorphic symplectic form $\omega_{+}$ of type $(2,0)$ given in terms of the local coordinates%
		\footnote{In what follows, we suppress the index $\alpha$ if it is clear in which coordinate patch $U_\alpha$ we are working.
		}
$(z,\xi)=(z_1,\dots,z_n,\xi_1,\dots,\xi_n)$ by
\begin{equation}
\label{e:local expression of omega_+}
	\omega_{+}|_{\pi_\lambda^{-1}(U_\alpha)} = \sum_{i=1}^{n} \rmd z_i \wedge \rmd \xi_i.
\end{equation}
Similarly, $\ourZ$ is equipped with an anti-holomorphic symplectic form $\omega_-$ on $\ourZ$ of type $(0,2)$ given by
\begin{equation}
	\omega_{-}|_{\pi_\lambda^{-1}(U_\alpha)} =\sum_{i=1}^{n} \rmd \bar z_i \wedge \rmd \bar \xi_i.
\end{equation}
We remark that there is a natural holomorphic $\Gc$-action $\gamma_{\!\lambda}$ on $\ourZ$ that covers the left translation on $\Gc / Q \simeq \CP^n$, i.e.~the following diagram commutes for all $g \in \Gc$: 
\begin{equation}
\begin{tikzcd}
	\ourZ \ar[r,"","\gamma_{\!\lambda}(g)"]	\ar[d,"\pi_\lambda"']	& \ourZ	\ar[d,"\pi_\lambda"] 	\\
	\CP^n \ar[r,"","g."]											& \CP^n.
\end{tikzcd}
\end{equation}
Note also that $\gamma_{\!\lambda}$ is Hamiltonian with respect to $\omega_+$, and that there is an anti-holomorphic Hamiltonian action with respect to $\omega_-$ as well.

Let $\mu_+$ and $\mu_-$ denote the moment maps $\ourZ \to \g^*$ with respect to $\omega_+$ and $\omega_-$ respectively.
Then $\mu_+$ provides a holomorphic $\Gc$-equivariant symplectic isomorphism from $\ourZ$ onto $\ourorbit$:
\begin{equation}
	\mu_+ : \ourZ \xrightarrow{\sim} \ourorbit,
\end{equation}
where the symplectic structure on $\ourorbit$ is given by the Kirillov-Kostant-Souriau form (\cite[Theorem 3.14]{equiv_sympl}).
Explicitly, $\mu_+$ is given on $\pi_\lambda^{-1}(U_\alpha)$ in terms of the local coordinates $(z,\xi)$ by
\begin{equation}
	\mu_+|_{\pi_\lambda^{-1}(U_{\alpha})} (z,\xi) = \Ad^*(g) \lambda \quad \text{with} \quad g = \sigma_\alpha\, u_z u^-_w,
\end{equation}
where $\xi=-s w$.

\begin{proposition}
If we denote the moment maps $\mu_+,\,\mu_-: \ourZ \to \g^*$ followed by the identification $\g^* \simeq \g$ via the trace form $B$, also by the same notations,
then we have
\begin{equation}
	\mu_- = - \mu_+^{*},
\end{equation}
where ${}^{*}$ denotes the transpose conjugate of matrices.
\end{proposition}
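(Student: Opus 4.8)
The plan is to derive $\mu_-$ from the explicit form of $\mu_+$ by complex conjugation, with the whole identity resting on the single observation that, under the trace form $B$, complex conjugation of the pairing $B(\,\cdot\,,\,\cdot\,)$ is implemented by the Cartan conjugation $\theta\colon X\mapsto -X^{*}$ attached to the compact real form $\g_0=\mathfrak{su}(n+1)$. I would begin from the formula recalled above: after the identification $\g^{*}\simeq\g$ via $B$ one has $\mu_+(z,\xi)=\Ad(g)\lambda^{\vee}=g\lambda^{\vee}g^{-1}$ with $g=\sigma_\alpha u_z u_w^-$, so that for every $X\in\g$ the holomorphic function $B(\mu_+,X)=\trace(\mu_+ X)$ satisfies the defining relation $\iota_{X^{\#}}\omega_+=dB(\mu_+,X)$, where $X^{\#}$ is the type $(1,0)$ fundamental vector field of the holomorphic action $\gamma_\lambda$.

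Since $\omega_-=\overline{\omega_+}$, the anti-holomorphic Hamiltonian action referred to above is the one whose type $(0,1)$ generators are $V_X:=\overline{(\theta X)^{\#}}=\overline{(-X^{*})^{\#}}$; this assignment is $\C$-linear in $X$ (being the composite of two conjugate-linear maps with one $\C$-linear one), its generators are the complex conjugates of the holomorphic ones, and for $X\in\g_0$ it has the same real flow as $\gamma_\lambda$, so the two actions agree on the compact form $\Gu$ as required. Taking the complex conjugate of the defining relation for $\mu_+$ and using the elementary identity $\overline{B(\mu_+,X)}=\overline{\trace(\mu_+ X)}=\trace(\overline{\mu_+}\,\overline{X})=\trace(\mu_+^{*}X^{*})=B(\mu_+^{*},X^{*})$ (the middle step by transposing and cyclicity), I obtain $\iota_{\overline{X^{\#}}}\omega_-=dB(\mu_+^{*},X^{*})$. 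Replacing $X$ by $-X^{*}$ throughout, which is legitimate since $X$ ranges over all of $\g$, turns the left-hand side into $\iota_{V_X}\omega_-$ and the right-hand side into $dB\big(\mu_+^{*},(-X^{*})^{*}\big)=dB(\mu_+^{*},-X)=dB(-\mu_+^{*},X)$. Comparing with the defining relation $\iota_{V_X}\omega_-=dB(\mu_-,X)$ yields $\mu_-=-\mu_+^{*}$, up to an additive constant.

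To remove that ambiguity I would note that the difference of two maps satisfying the same defining relation is a constant $\g$-valued function on the connected manifold $\ourZ$, and that equivariance forces it into the center $\mathfrak{z}(\g)$, which is $0$ because $\g=\mathfrak{sl}_{n+1}(\C)$ is semisimple. This can also be seen concretely on each chart: writing $h:=(g^{*})^{-1}=\sigma_\alpha u^-_{-z^{*}}u_{-w^{*}}$ and using $(\lambda^{\vee})^{*}=\overline{\lambda^{\vee}}$ (as $\lambda^{\vee}$ is a scalar multiple of a real diagonal matrix), one computes $-\mu_+^{*}=-h(\lambda^{\vee})^{*}h^{-1}=\Ad(h)\big(-\overline{\lambda^{\vee}}\big)$, which is precisely the local expression of $\mu_-$ coming from the anti-holomorphic analogue of the construction of $\omega_-$ on $\pi_\lambda^{-1}(U_\alpha)$; the two therefore agree with no leftover constant.

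The routine parts are the trace manipulation and the matrix computation of $h=(g^{*})^{-1}$. The step I expect to demand the most care is the correct identification of the anti-holomorphic action and of the pairing normalizing $\mu_-$, so that the conjugation produces exactly $-\mu_+^{*}$ rather than $+\mu_+^{*}$ or $\overline{\mu_+}$; here recognizing $\theta(X)=-X^{*}$ as the conjugation attached to the real form $\g_0$---equivalently, as the differential of $g\mapsto(g^{*})^{-1}$, the automorphism fixing $\Gu$ pointwise that relates the holomorphic and anti-holomorphic data---is what makes all the signs and conjugations fall into place.
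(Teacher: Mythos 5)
Your proof is correct. There is nothing in the paper to compare it against: the author explicitly leaves this proof to the reader, and your write-up supplies precisely the ``straightforward'' verification that was intended, with the one genuinely delicate point handled properly. That point is your identification of the anti-holomorphic Hamiltonian action as $\gamma_\lambda$ twisted by $g \mapsto (g^{*})^{-1}$, i.e.\ by $\theta(X) = -X^{*}$ on $\g$; this is what makes $X \mapsto B(\mu_-,X)$ complex-linear (the composite of two conjugate-linear operations, as you note) and is the only convention consistent with the restriction to $\Gu$, and it is exactly what produces the sign in $\mu_- = -\mu_+^{*}$ rather than $+\mu_+^{*}$ or $\overline{\mu_+}$. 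Your chain of identities checks out: conjugating $\iota_{X^{\#}}\omega_+ = \rmd B(\mu_+,X)$, using $\overline{\trace(\mu_+ X)} = \trace(\mu_+^{*} X^{*})$, and substituting $X \mapsto -X^{*}$ yields $\iota_{V_X}\omega_- = \rmd B(-\mu_+^{*},X)$, and the additive constant is killed either by equivariance plus semisimplicity of $\mathfrak{sl}_{n+1}(\C)$ or by your chart computation $-\mu_+^{*} = \Ad\big((g^{*})^{-1}\big)\big(-\overline{\lambda^{\vee}}\big)$ with $(g^{*})^{-1} = \sigma_\alpha\, u^{-}_{-z^{*}} u_{-w^{*}}$, which I verified. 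The only small caveat is that the chart comparison presupposes the local formula for $\mu_-$ from \cite{equiv_sympl}, so strictly speaking the abstract conjugation argument is the one carrying the full load --- but that argument is complete on its own.
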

\begin{proof} 
The proof is straightforward, and is left to the reader.
\end{proof}

Now we are ready to introduce our main player.
Namely, set 
\begin{equation}
\label{e:seed}
\begin{aligned}
	\tau &:= - B(\mu_+, \mu_-) + \frac{\abs{s}^2}{n+1}
		\\
		&= B(\mu_+, \mu_+^*) + \frac{\abs{s}^2}{n+1},
\end{aligned}
\end{equation}
which is given in terms of the local coordinates $(z,\xi)$ by
\begin{equation}
\label{e:local expression of seed}
	\tau |_{\pi_\lambda^{-1}(U_\alpha)} = ( \norm{z}^2 + 1 )( \norm{\xi}^2 + \abs{s - \xi z}^2 ),
\end{equation}
where we regard $z \in \C^n$ (column vector) and $\xi \in (\C^n)^*$ (row vector), and $\norm{\,\cdot\,}^2$ denotes the norm squared, i.e.~$\norm{z}^2=z^* z, \norm{\xi}^2=\xi \xi^*$.
Note that we need the constant term in \eqref{e:seed} to factorize the local expression of $\tau$ as in \eqref{e:local expression of seed}.

\begin{remark}
By definition, $\tau$ is a $\Gu$-invariant function on $\ourZ$ with values in $\R_{\geqsl 0}$; it takes its minimum $\abs{s}^2$ on the submanifold $\ourLSmfd$, where $\ourLSmfd$ is locally given by
\begin{equation}
\label{e:Laglangian submfd}
	\ourLSmfd|_{\pi_\lambda^{-1}(U_\alpha)}=\left\{ \frac{s \, z^* \rmd z}{1+\norm{z}^2} ; z \in U_\alpha \right\} \subset T^* U_\alpha.
\end{equation}
Note that $\ourLSmfd$ provides an example of the \emph{LS-submanifold} of $\ourZ$, i.e.~it is Lagrangian with respect to $\im \omega_+$ and symplectic with respect to $\re \omega_+$ (see \cite{Donaldson02_holomorphic_discs}).
One also notes that $\ourLSmfd$ corresponds to the $\Gu$-orbit $\mathscr O_{\!\lambda}$ of $\lambda$ under the isomorphism $\mu_+: \ourZ \to \ourorbit$:
\begin{equation*}
\begin{tikzcd}
	\ourZ \ar[r,"\sim"',"\mu_+"]						& \ourorbit							 \\
	\ourLSmfd \ar[r,"\sim"',"\mu_+"]	\ar[u, hook]	& {\mathscr O}_{\!\lambda} \ar[u, hook],
\end{tikzcd}
\end{equation*}
and thus $\ourLSmfd \simeq {\mathscr O}_{\!\lambda} \simeq \Gu / (\Gu \cap L) = \SU(n+1) / S(\U(n) \times \U(1)) \simeq \CP^n$.
%
\end{remark}

\section{Ricci-flat K{\"a}hler Metric}

Henceforth, we assume that $s \ne 0$.

In this section, we find Ricci-flat K\"ahler metrics on the twisted cotangent bundle $Z$ under an Ansatz.
Namely, for an $\R$-valued function $f \in C^\infty(\R)$ in a single variable, let us consider a 1-form $\beta$ on $\ourZ$ given by
\begin{equation}
\label{e:one-form}
	\beta := \frac{1}4 f(\tau)\, \rmd^c \tau = \frac{\ai}4 f(\tau) (\antidel \tau - \del \tau)
\end{equation}
and define a 2-form $\omega_I$ by $\omega_I:=\rmd \beta$, where $\tau \in C^\infty(\ourZ)$ is the $\Gu$-invariant function given by \eqref{e:seed}.
Note then that
\begin{equation}
\label{e:omega_I}
	\omega_I = \frac{\ai}2 (f(\tau) \del \antidel \tau + f'(\tau) \del \tau \wedge \antidel \tau)
\end{equation}
is of type $(1,1)$, and satisfies
\begin{equation}
\label{e:omega & I}
	\omega_I (Iv,Iw)=\omega_I (v,w)  \quad (v,w \in \mathfrak X (\ourZ)),
\end{equation} 
where $I$ denotes the canonical complex structure on $\ourZ$.
Furthermore, one can see that $g^* \omega_I = \omega_I$ for all $g \in \Gu$ since $\tau$ is $\Gu$-invariant.

In what follows, we will be concerned with such functions $f \in C^\infty(\R)$ that satisfy
\begin{equation}
\label{e:positivity omega_I}
	\omega_I(v, I v) > 0
\end{equation}
for all nonzero $v \in \mathfrak X (\ourZ)$.
Let $g$ be the metric on $\ourZ$ whose fundamental $2$-form is $\omega_I$.
Namely, 
\begin{equation}
\label{e:def of metric g}
	g(v,w):=\omega_I ( v, I w) \quad (v,w \in \mathfrak{X}(\ourZ)).
\end{equation}
Then $(\ourZ,g,I)$ is a K\"ahler manifold if the condition \eqref{e:positivity omega_I} is satisfied.
In particular, $\omega_I$ is a symplectic form on $\ourZ$ (i.e.~non-degenerate on $\ourZ$).

\begin{remark}
\label{r:ansatz on beta}
It is clear from \eqref{e:xi_alpha and xi_beta} that if $\lambda$ tends to $0$ then our twisted cotangent bundle $\ourZ=(T^*\CP^n)_\lambda$ reduces to the cotangent bundle $T^*\CP^n$.
Kapustin and Saulina showed in \cite{KapustinSaulina09} that Eguchi-Hanson metric on $T^*\CP^1$, which is not only Ricci-flat but also hyperk\"ahler, is obtained if one takes $f(\tau)$ in \eqref{e:one-form} to be $(1+\tau)^{1/2}/\tau$.
This is the reason we assume the 1-form $\beta$ to be of the form given above.

\end{remark}

For the sake of space, we sometimes use the following convention about indices.
Namely, for $i=1,2,\dots,n$, we regard $\xi_i$ as the $(n+i)$-th coordinate:
\begin{equation}
\label{e:convention about idx}
	z_{n+i} := \xi_i.
\end{equation}
Under the convention \eqref{e:convention about idx}, let $g_{i \bar j}$, $i,j=1,2,\dots,2n$, denote (twice) the components of the metric $g$ in terms of local coordinates:
\begin{equation}
\label{e:local metric}
	g|_{\pi_\lambda^{-1}(U_\alpha)} = \frac12 \sum_{i,j=1}^{2n} g_{i \bar j}\, \rmd z_i \rmd \bar z_j
\end{equation}
and set 
\begin{equation}
\label{e:matrix corresponding to g}
	A_\alpha := (g_{i \bar j})_{i,j=1,2,\dots,2n} \in C^\infty(\pi_\lambda^{-1}(U_\alpha),\End(\C^{2n})).
\end{equation}

Recall that the Ricci form $\rho$ on the K\"ahler manifold $(\ourZ, g, I)$ with respect to the Levi-Civita connection is locally given by
\[
	\rho|_{\pi_\lambda^{-1}(U_\alpha)} = \ai\, \antidel \del \log \det A_\alpha. 
\]
The following theorem is a key to find Ricci-flat metrics. 
%
%
\begin{theorem}
\label{t:ricci-flat}
The metric $g$ satisfies $\det A_\alpha = 1$ if and only if the function $f$ satisfies that
\begin{equation}
\label{e:ricci-flat eq}
	2 f'(\tau) f(\tau)^{2n-1} \tau^n (\tau - \tau_0) + f(\tau)^{2n} \tau^{n-1} (2\tau - \tau_0) = 1,
\end{equation}
where we set $\tau_{0}=\abs{s}^2$.
\end{theorem}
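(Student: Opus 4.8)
The plan is to compute $\det A_\alpha$ explicitly and match it against the left-hand side of \eqref{e:ricci-flat eq} as a function of $\tau$. First I would read off from \eqref{e:omega_I} and \eqref{e:def of metric g} that, in the index convention \eqref{e:convention about idx}, the components of the metric are
\[
	g_{i\bar j} = f(\tau)\,\tau_{i\bar j} + f'(\tau)\,\tau_i\tau_{\bar j}, \qquad i,j=1,\dots,2n,
\]
where $\tau_{i\bar j}=\partial_{z_i}\partial_{\bar z_j}\tau$, $\tau_i=\partial_{z_i}\tau$ and $\tau_{\bar j}=\overline{\tau_j}$. Hence the matrix \eqref{e:matrix corresponding to g} is a rank-one perturbation of a scalar multiple of the complex Hessian of $\tau$:
\[
	A_\alpha = f(\tau)\,H + f'(\tau)\,\mathbf a\,\mathbf a^{*}, \qquad H:=(\tau_{i\bar j})_{i,j=1}^{2n},\quad \mathbf a:=\tp{(\tau_1,\dots,\tau_{2n})}.
\]
By the matrix determinant lemma applied to this rank-one update,
\[
	\det A_\alpha = f(\tau)^{2n}\det H + f(\tau)^{2n-1}f'(\tau)\;\mathbf a^{*}\operatorname{adj}(H)\,\mathbf a,
\]
where $\operatorname{adj}(H)=\det(H)\,H^{-1}$ is the adjugate. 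Comparing with the right-hand side of \eqref{e:ricci-flat eq}, the theorem reduces to the two purely algebraic identities
\[
	\det H = \tau^{n-1}(2\tau-\tau_0), \qquad \mathbf a^{*}\operatorname{adj}(H)\,\mathbf a = 2\tau^{n}(\tau-\tau_0),
\]
which assert in particular that both quantities depend only on the invariant $\tau$.

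To establish these identities I would use the factored local form \eqref{e:local expression of seed}, writing $\tau=pq$ with $p:=\norm z^2+1$ and $q:=\norm\xi^2+\abs{s-\xi z}^2$, and expand $H$ by the Leibniz rule. A short computation of the separate Hessians and gradients of $p$ and $q$ shows that everything organizes into
\[
	H = \operatorname{diag}(q\,1_n,\; p\,1_n) + p\,\mathbf b\,\mathbf b^{*} - (s-\xi z)\,\mathbf p\,\mathbf b^{*} - \overline{(s-\xi z)}\,\mathbf b\,\mathbf p^{*},
\]
where $\mathbf b:=\tp{(\xi_1,\dots,\xi_n,z_1,\dots,z_n)}$ and $\mathbf p:=\tp{(\bar z_1,\dots,\bar z_n,0,\dots,0)}$. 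Thus $H$ is a two-scalar-block diagonal matrix perturbed by a Hermitian term of rank at most two, supported on $\operatorname{span}\{\mathbf b,\mathbf p\}$; likewise $\mathbf a=q\,\mathbf p-p\,\overline{(s-\xi z)}\,\mathbf b$ lies in that span. This is exactly the situation for which the appendix collects the determinant and inverse/adjugate formulas for rank-two perturbations of diagonal matrices. Feeding these data into those formulas, I would evaluate $\det H$ and the quadratic form $\mathbf a^{*}\operatorname{adj}(H)\,\mathbf a$ and verify that they collapse to the stated polynomials in $\tau$ and $\tau_0=\abs s^2$.

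Finally, since by the remark following \eqref{e:local expression of seed} the function $\tau$ sweeps out the whole interval $[\tau_0,\infty)$, the pointwise requirement $\det A_\alpha\equiv 1$ on $\pi_\lambda^{-1}(U_\alpha)$ is equivalent to the identity
\[
	f^{2n}\tau^{n-1}(2\tau-\tau_0)+2f^{2n-1}f'\tau^{n}(\tau-\tau_0)=1
\]
holding for every $\tau\ge\tau_0$, which is precisely \eqref{e:ricci-flat eq}; this yields both implications at once. The main obstacle is the middle step: carrying out the Hessian determinant and adjugate computation and seeing the dependence on $z,\xi$ cancel so cleanly that only $\tau$ and $\tau_0$ survive. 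The low-rank structure isolated in the appendix is what makes this tractable, and checking that the two target polynomials come out with the correct coefficients, rather than merely confirming $\Gu$-invariance, is the real content of the theorem.
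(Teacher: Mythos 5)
Your overall reduction coincides with the paper's: writing $A_\alpha = f(\tau)\,H + f'(\tau)\,\mathbf{a}\,\mathbf{a}^*$ and exploiting the rank-one structure of the second summand (you via the matrix determinant lemma, the paper via multilinearity of $\det$ in the columns together with Cramer's rule) reduces the theorem to the two identities $\det H = \tau^{n-1}(2\tau-\tau_0)$ and $\mathbf{a}^*\operatorname{adj}(H)\,\mathbf{a} = 2\tau^{n}(\tau-\tau_0)$, which are precisely Propositions \ref{p:det mat tau_xy} and \ref{p:det mat tau_x tau_y}. That step, and your closing observation that $\tau$ sweeps out all of $[\tau_0,\infty)$ so the pointwise condition $\det A_\alpha\equiv 1$ is equivalent to the ODE, are correct.

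The gap is in the middle step, which you yourself identify as the real content. Your decomposition of $H$ is wrong: comparing with \eqref{e:tau_xy}, the off-diagonal blocks contain the cross terms $\bar z_i\,\xi_j$ (in $\tau_{i,\overline{n+j}}$) and $\bar\xi_i\, z_j$ (in $\tau_{n+i,\bar j}$), which $p\,\mathbf{b}\mathbf{b}^* - s_{xz}\,\mathbf{p}\mathbf{b}^* - \bar s_{xz}\,\mathbf{b}\mathbf{p}^*$ does not produce. Setting $\mathbf{q} := \tp{(0,\dots,0,\bar\xi_1,\dots,\bar\xi_n)}$, the correct perturbation is
\begin{equation*}
	H - \operatorname{diag}(q\,1_n,\,p\,1_n) \;=\; p\,\mathbf{b}\mathbf{b}^* - s_{xz}\,\mathbf{p}\mathbf{b}^* - \bar s_{xz}\,\mathbf{b}\mathbf{p}^* + \mathbf{p}\mathbf{q}^* + \mathbf{q}\mathbf{p}^*,
\end{equation*}
whose range is generically $\operatorname{span}\{\mathbf{b},\mathbf{p},\mathbf{q}\}$, i.e.~rank \emph{three}, not two; likewise, by \eqref{e:tau_x}, $\mathbf{a} = q\,\mathbf{p} - p\,\bar s_{xz}\,\mathbf{b} + p\,\mathbf{q}$, so your formula for $\mathbf{a}$ drops the $\xi^*$ part. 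Moreover, the appendix lemmas treat $u\,1_n + (\text{rank}\le 2)$, a \emph{scalar} matrix plus rank two; they say nothing about perturbations of $\operatorname{diag}(q\,1_n,\,p\,1_n)$ with $p\ne q$, so even a true rank-two perturbation could not be fed into them directly. The paper circumvents both problems by a block reduction absent from your plan: a Schur complement with respect to $a_{22}=\psi_1(1_n+zz^*)$, which is invertible by the rank-one formula \eqref{e:1/(1+rank-one mat)}; the complement $a_{11}-a_{12}\,a_{22}^{-1}a_{21}$ is then $\psi_2 1_n$ plus a genuine rank-two matrix \eqref{e:a_11 etc}, where Lemmas \ref{l:det of 1+rank2mat} and \ref{l:1/(1+rank-two mat)} apply, and the quadratic form $\vec{b}^*(A'_\alpha)^{-1}\vec{b}$ is evaluated through the factorization \eqref{e:mat decomp UTU}. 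As written, your computation would stall (or yield wrong polynomials) at exactly this point.
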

\begin{proof}
By definition, the components $g_{i \bar j}$ are given by 
\begin{equation}
	g_{i \bar j} = f(\tau) \tau_{i, \bar j} + f'(\tau) \tau_{i} \tau_{\bar j}
\end{equation}
with $\tau_i:=\pd_{z_i} \tau, \, \tau_{\bar i}:=\pd_{\bar z_i} \tau$ and $\tau_{i, \bar j}:=\pd_{z_i} \pd_{\bar z_j} \tau$ for $i,j=1,2,\dots,2n$ under the convention \eqref{e:convention about idx}.
Setting $c:=f(\tau), c_1:=f'(\tau)$ for brevity and
\begin{align}
	\vec{a}_j &:= \tp{(\tau_{1, \bar j},\dots,\tau_{2n, \bar j})} \quad (j=1,2,\dots,2n), 
		\label{e:mat g by tau_xy}
	\\
	\vec{b} &:= \tp{(\tau_1,\dots,\tau_{2n})}, 
		\label{e:mat g by tau_x}
\end{align}
we write $A_\alpha$ as
\begin{equation}
\label{e:mat 4 metric g}
	A_\alpha 	= c ( \vec{a}_1, \vec{a}_2,\dots, \vec{a}_{2n} ) + c_1 ( \tau_{\bar 1} \vec{b}, \tau_{\bar 2} \vec{b},\dots, \tau_{\overline{2n}} \vec{b} ).
\end{equation}
Therefore,
\begin{equation}
	\det A_\alpha = c^{2n} \det(\vec{a}_1,\dots,\vec{a}_{2n}) 
				+ c^{2n-1} c_1 \sum_{k=1}^{2n}  \tau_{\bar k} \det (\vec{a}_1,\dots,\vec{a}_{k-1}, \hspace{-3pt}\overset{k\text{-th}}{\vec{b}}\hspace{-3pt}, \vec{a}_{k+1},\dots,\vec{a}_{2n}).
\end{equation}
Now the theorem follows from Propositions \ref{p:det mat tau_xy} and \ref{p:det mat tau_x tau_y} below.
\end{proof}

In the proof of the propositions below, we will use the following notations: 
\begin{equation}
\begin{aligned}
\label{e:psi1 and psi2}
	\psi_1 &:= \norm{z}^2 + 1, \\
	\psi_2 &:= \norm{\xi}^2 + \abs{s_{xz}}^2, \\
	s_{xz} &:= s - \xi z
\end{aligned}
\end{equation}
with $z=\tp{(z_1,z_2,\dots,z_n)} \in \C^n,\, \xi=(\xi_1,\xi_2,\dots,\xi_n) \in (\C^n)^*$.
Then, it is immediate from the local expression \eqref{e:local expression of seed} of $\tau$ to show that
\begin{equation}
\label{e:tau_x}
\begin{aligned}
	& \tau_{i}  = \psi_2\, \bar z_i -\psi_1 \bar s_{xz}\, \xi_i,
		\\
	& \tau_{n+i} = \psi_1 (\bar \xi_i - \bar s_{xz}\, z_i),
\end{aligned}
\end{equation}
for $i=1,\dots,n$, and
\begin{equation}
\label{e:tau_xy}
\begin{aligned}
	& \tau_{i, \bar j}	= \psi_2\, \delta_{ij} - s_{xz}\, \bar z_i \bar \xi_j - \bar s_{xz}\, \xi_i z_j + \psi_1\, \xi_i \bar \xi_j,
		\\
	& \tau_{i, \overline{n+j}} 	= \psi_1\, \xi_i \bar z_j + \bar z_i \xi_j - s_{xz}\, \bar z_i \bar z_j,
		\\
	& \tau_{n+i,\bar j}			= \psi_1\, z_i \bar \xi_j + \bar \xi_i z_j - \bar s_{xz}\, z_i z_j,
		\\
	& \tau_{n+i, \overline{n+j}} = \psi_1 ( \delta_{ij} + z_i \bar z_j ) 
\end{aligned}
\end{equation}
for $i,j=1,\dots,n$, where $\delta_{ij}$ denotes the Kronecker delta.

%
%
\begin{proposition}
\label{p:det mat tau_xy}
Let $\vec{a}_i$ $ $(i=1,2,\dots,2n$)$ be given by \eqref{e:mat g by tau_xy}.
Then one has
\begin{equation}
	\det(\vec{a}_1,\vec{a}_2,\dots,\vec{a}_{2n}) = \tau^{n-1} (2\tau - \tau_0), 
	\label{e:mat_cc}
\end{equation}
\end{proposition}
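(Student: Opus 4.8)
The plan is to read $\det(\vec a_1,\dots,\vec a_{2n})$ as the determinant of the full complex Hessian $H:=(\tau_{i,\bar j})_{i,j=1}^{2n}$ and to exploit the factorization $\tau=\psi_1\psi_2$ of \eqref{e:local expression of seed}, with $\psi_1,\psi_2$ as in \eqref{e:psi1 and psi2}. For a function $h$ write $\partial h:=\tp{(\partial_{z_1}h,\dots,\partial_{z_{2n}}h)}$, $\bar\partial h:=\tp{(\partial_{\bar z_1}h,\dots,\partial_{\bar z_{2n}}h)}$ and $\mathrm{Hess}\,h:=(\partial_{z_i}\partial_{\bar z_j}h)$. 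The Leibniz rule then gives
\[
	H=\psi_2\,\mathrm{Hess}\,\psi_1+\psi_1\,\mathrm{Hess}\,\psi_2
	+(\partial\psi_1)\tp{(\bar\partial\psi_2)}+(\partial\psi_2)\tp{(\bar\partial\psi_1)}.
\]
Since $\psi_1=1+\norm{z}^2$ involves only the $z$-variables, $\mathrm{Hess}\,\psi_1=P$ is the projection onto the first $n$ coordinates, while a direct differentiation of $\psi_2$ (equivalently, inspection of \eqref{e:tau_xy}) yields $\mathrm{Hess}\,\psi_2=v\,v^{*}+(I_{2n}-P)$, where $v:=\tp{(\xi_1,\dots,\xi_n,z_1,\dots,z_n)}\in\C^{2n}$.

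With this in hand I would record that $H=D+R$, where
\[
	D:=\psi_2 P+\psi_1(I_{2n}-P)=\begin{bmatrix}\psi_2 I_n&0\\0&\psi_1 I_n\end{bmatrix}
\]
and $R:=\psi_1 v v^{*}+(\partial\psi_1)\tp{(\bar\partial\psi_2)}+(\partial\psi_2)\tp{(\bar\partial\psi_1)}$ is a perturbation of rank at most three. Because $s\ne0$ forces $\psi_2>0$, the matrix $D$ is invertible with $\det D=(\psi_1\psi_2)^{n}=\tau^{n}$. Writing $R=X\,\tp{Y}$ with $X:=[\,\psi_1 v,\ \partial\psi_1,\ \partial\psi_2\,]$ and $Y:=[\,\bar v,\ \bar\partial\psi_2,\ \bar\partial\psi_1\,]$ (both of size $2n\times3$), the matrix determinant lemma for a rank-three update, $\det(D+X\tp{Y})=\det(D)\,\det(I_3+\tp{Y}D^{-1}X)$ (cf.\ the Appendix), reduces the problem to a single $3\times3$ determinant:
\[
	\det H=\tau^{n}\,\det(I_3+M),\qquad M:=\tp{Y}\,D^{-1}X.
\]

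Next I would compute $M$ entrywise. Each entry is a $D^{-1}$-weighted bilinear pairing of one of $v,\partial\psi_1,\partial\psi_2$ against one of $\bar v,\bar\partial\psi_1,\bar\partial\psi_2$, and using the first derivatives of $\psi_1,\psi_2$ (which one reads off from \eqref{e:tau_x}) these collapse to polynomials in the $\Gu$-invariants $\norm{z}^2$, $\norm{\xi}^2$, $\xi z$ and in $s$; in particular the non-invariant pairing $\bar\xi z$ cancels, as it must. The remaining task, which I expect to be the main obstacle, is the bookkeeping of these nine pairings and the verification that the resulting $3\times3$ determinant simplifies to
\[
	\det(I_3+M)=\frac{2\tau-\tau_0}{\tau},\qquad \tau_0=\abs{s}^2 .
\]
Granting this, $\det H=\tau^{n}\cdot(2\tau-\tau_0)/\tau=\tau^{n-1}(2\tau-\tau_0)$, which is \eqref{e:mat_cc}. (I would sanity-check this identity on the slice $\xi=0$ and for $n=1$ before committing to the general expansion, to guard against sign errors in the pairings.)
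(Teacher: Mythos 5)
Your argument is correct, and it takes a genuinely different route from the paper's. The paper proves \eqref{e:mat_cc} by writing $(\vec a_1,\dots,\vec a_{2n})$ in the $2\times 2$ block form \eqref{e:block form tau_xy}, inverting $a_{22}$ via the rank-one formula \eqref{e:1/(1+rank-one mat)}, and evaluating the Schur complement $a_{11}-a_{12}a_{22}^{-1}a_{21}$ as $\psi_2 1_n$ plus a rank-two perturbation, to which the rank-two determinant lemma (Lemma \ref{l:det of 1+rank2mat}) is applied; the result then combines with $\det a_{22}=\psi_1^{\,n+1}$. You instead exploit the factorization $\tau=\psi_1\psi_2$ at the level of the whole $2n\times 2n$ Hessian: your identifications $\mathrm{Hess}\,\psi_1=P$ and $\mathrm{Hess}\,\psi_2=v v^{*}+(1_{2n}-P)$ are correct, so $H=D+R$ with $D=\mathrm{diag}(\psi_2 1_n,\psi_1 1_n)$ invertible (indeed $\psi_2>0$ everywhere since $s\ne 0$) and $R$ of rank at most three, and one application of $\det(D+X\tp{Y})=\det D\,\det(1_3+\tp{Y}D^{-1}X)$ reduces everything to a $3\times 3$ determinant. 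Two remarks. First, that rank-three identity is \emph{not} in the paper's Appendix, which only treats rank-$\leqsl 2$ updates; it is the standard Weinstein--Aronszajn identity, so this is a citation slip rather than a gap. Second, the bookkeeping you defer does close: the nine pairings involve only $\norm{z}^2,\norm{\xi}^2,\xi z,\overline{\xi z},s_{xz}$ (e.g.\ $\tp{\,\bar v}D^{-1}v=\psi_2^{-1}\norm{\xi}^2+\psi_1^{-1}\norm{z}^2$), and the determinant does equal $(2\tau-\tau_0)/\tau$ --- this checks out on the slices $z=0$ and $\xi=0$ and at generic points for $n=1$, so your asserted value is right, though a complete proof must still write out $M$ and expand. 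Comparing payoffs: your route is shorter and makes the power $\tau^{n-1}$ transparent from $\det D=\tau^n$, whereas the paper's heavier Schur-complement data ($a_{22}^{-1}$, the Schur complement and its explicit inverse) are reused verbatim in Proposition \ref{p:det mat tau_x tau_y} and in the proof of Theorem \ref{t:hk_sol}, so its cost amortizes across the paper.
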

\begin{proof}
Let us write the matrix $(\vec{a}_1,\vec{a}_2,\dots,\vec{a}_{2n})$ in the block form: 
\begin{equation}
\label{e:block form tau_xy}
	(\vec{a}_1,\vec{a}_2,\dots,\vec{a}_{2n}) =: \begin{bmatrix} a_{11} & a_{12} \\ a_{21} & a_{22} \end{bmatrix}
\end{equation}
with $a_{ij} \in \Mat{n \times n}(\C)$ ($i,j=1,2$)%
	\footnote{To be precise, each block $a_{ij}$ is a $C^\infty$-function taking values in $\Mat{n \times n}(\C)$; 
				we will use this kind of abbreviation below.}%
.
Then, it follows from \eqref{e:tau_xy} that%
	\footnote{It would be more comprehensible to introduce bra- and ket-vectors and denote $\bar z\, \bar \xi$ by $|\bar z \> \<\bar \xi|$ etc.
				However, we will adopt a bit awkward notation to keep expressions simple.}
\begin{equation}
\label{e:submat cc}
\begin{aligned}
	a_{11} &= \psi_2 1_n - s_{xz}\, \bar z \, \bar \xi - \bar s_{xz} \tp{\xi} \tp{z} + \psi_1 \tp{\xi} \, \bar \xi,	\\
	a_{12} &= \psi_1 \tp{\xi} \, z^* + \bar z \, \xi - s_{xz} \, \bar z \, z^*, \\
	a_{21} &= \psi_1 z \, \bar \xi + \xi^* \tp{z} - \bar s_{xz} \, z \, \tp{z}, \\
	a_{22} &= \psi_1 (1_n + z \, z^*).
\end{aligned}
\end{equation}
Note in particular that
\begin{equation}
\label{e:hermiteness mat cc}
	{a_{11}}^*=a_{11}, \quad a_{21}={a_{12}}^* \quad \text{and} \quad {a_{22}}^*=a_{22},
\end{equation}
which also follows from the definition.

The matrix decomposition
\begin{equation}
\label{e:mat decomp UL}
\begin{aligned}
	\begin{bmatrix} a_{11} & a_{12} \\[1.5pt] a_{21} & a_{22} \end{bmatrix} 
		&= \begin{bmatrix} 1_n & a_{12}\, {a_{22}}^{-1}  \\ 0 & 1_n \end{bmatrix} 
				\begin{bmatrix} a_{11} - a_{12}\, {a_{22}}^{-1}\, a_{21} & 0 \\ a_{21} & a_{22} \end{bmatrix}
\end{aligned}
\end{equation}
implies that 
\[
	\det (\vec{a}_1,\vec{a}_2,\dots,\vec{a}_{2n})=\det(a_{11} - a_{12}\, {a_{22}}^{-1}\, a_{21}) \det a_{22}.
\]
Now, using \eqref{e:1/(1+rank-one mat)} in Lemma \ref{l:1/(1+rank-two mat)},
one obtains
\begin{equation}
\label{e:1/a_22}
	{a_{22}}^{-1} = {\psi_1}^{-1} \big( 1_n - {\psi_1}^{-1} z z^* \big),
\end{equation}
from which and \eqref{e:submat cc}, it follows that
\begin{align}
	a_{11} - a_{12} {a_{22}}^{-1} a_{21} 
		&= \psi_2 1_n + \tp{\xi} \bar \xi - (\psi_2 {\psi_1}^{-1} - \abs{s}^2 {\psi_1}^{-2}) \bar z \tp{z} - s {\psi_1}^{-1} \bar z \, \bar \xi - \bar s {\psi_1}^{-1} \tp{\xi} \tp{z}
			\notag	\\
		&= \psi_2 1_n + \tp{\xi} \big( \bar \xi - \bar s {\psi_1}^{-1} \tp{z} \big) + \bar z \big( m \tp{z} - s {\psi_1}^{-1} \bar \xi \big),
			\label{e:a_11 etc}
\end{align}
where we set $m:={\psi_1}^{-2} (\abs{s}^2 - \psi_1 \psi_2)$.
To apply Lemma \ref{l:det of 1+rank2mat} to \eqref{e:a_11 etc}, set
\begin{align*}
	Z_1 &= \tp{\xi}, & \tp{\,W_1} &= \bar \xi - \bar s {\psi_1}^{-1} \tp{z}, 
		\\
	Z_2 &=\bar z, 	& \tp{\,W_2} &= m \tp{z} - s {\psi_1}^{-1} \bar \xi.
\end{align*}
Then one finds that
\begin{align*}
	\< W_1,Z_1 \> &= \norm{\xi}^2 - \bar s {\psi_1}^{-1} \xi z,	& \< W_1,Z_2 \> &= \overline{\xi z} - \bar s {\psi_1}^{-1} \norm{z}^2,
		\\
	\< W_2,Z_1 \> &= m\, \xi z - s {\psi_1}^{-1} \norm{\xi}^2,	& \< W_2,Z_2 \> &= m \norm{z}^2 - s {\psi_1}^{-1} \overline{\xi z},
\end{align*}
and hence, 
\begin{align*}
	\trace \Lambda &= \norm{\xi}^2 + m \norm{z}^2 -\bar s {\psi_1}^{-1} \xi z - s {\psi_1}^{-1} \overline{\xi z},
		\\
	\det \Lambda &= {\psi_1}^{-1} \psi_2 ( \abs{\xi z}^2 - \norm{\xi}^2 \norm{z}^2 )
\end{align*}
in the notation of Lemma \ref{l:det of 1+rank2mat}, \eqref{e:2by2 mat}.
Thus, in view of \eqref{e:psi1 and psi2}, one has
\begin{align*}
	\det & (a_{11} -  a_{12}\, {a_{22}}^{-1}\, a_{21})	
			\\
		&= {\psi_2}^{n-2} \Big( {\psi_2}^2 + \psi_2 \big( \norm{\xi}^2 + m \norm{z}^2 - {\psi_1}^{-1} (\bar s\, \xi z + s\, \overline{\xi z}) \big) + \psi_2 {\psi_1}^{-1} (\abs{\xi z}^2 - \norm{\xi}^2 \norm{z}^2) \Big)
			\\
		&= {\psi_2}^{n-1} \Big( {\psi_2} + \psi_1^{-1} \big( \norm{\xi}^2 + \abs{s}^2 - (\bar s\, \xi z + s\, \overline{\xi z}) + \abs{\xi z}^2 \big) - \abs{s}^2 {\psi_1}^{-1} + m \norm{z}^2 \Big)
			\\
		&= {\psi_2}^{n-1} \bigg( \psi_2 \Big( 1 + \frac1{\psi_1}-\frac{\norm{z}^2}{\psi_1} \Big) + \frac{\abs{s}^2}{\psi_1} \Big(\frac{\norm{z}^2}{\psi_1} - 1 \Big) \bigg) 
			\\
		&= {\psi_1}^{-2} {\psi_2}^{n-1} ( 2 \psi_1 \psi_2 - \abs{s}^2 )
\end{align*}
and
\begin{equation*}
	\det a_{22} = {\psi_1}^n \det(1_n + z z^*) = {\psi_1}^{n+1}.
\end{equation*}
Therefore, one obtains that
\begin{align*}
	\det (\vec{a}_1,\vec{a}_2,\dots,\vec{a}_{2n}) &= {\psi_1}^{n-1} {\psi_2}^{n-1} ( 2 \psi_1 \psi_2 - \abs{s}^2 )
		\\
			&= \tau^{n-1} ( 2 \tau - \tau_0 ).
\end{align*}
This completes the proof.
\end{proof}

%
%
\begin{proposition}
\label{p:det mat tau_x tau_y}
Let $\vec{a}_i$ $ $(i=1,\dots,2n$)$ and $\vec{b}$ be given by \eqref{e:mat g by tau_xy} and \eqref{e:mat g by tau_x} respectively.
Then one has
\begin{equation}
\label{e:mat_cc1}
	\sum_{k=1}^{2n} \tau_{\bar k} \det( \vec{a}_1, \dots, \vec{a}_{k-1}, \hspace{-3pt}\overset{k\text{-th}}{\vec{b}}\hspace{-3pt}, \vec{a}_{k+1},\dots,\vec{a}_{2n} ) = 2 \tau^n (\tau - \tau_0).
\end{equation}
\end{proposition}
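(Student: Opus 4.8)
The plan is to interpret the entire sum as a single bilinear form built from the complex Hessian of $\tau$ and the two gradient vectors, and then to reduce it—via the block/Schur machinery already assembled in the proof of Proposition \ref{p:det mat tau_xy}—to a scalar identity. First I would record the cofactor identity: expanding $\det(\vec a_1,\dots,\overset{k}{\vec b},\dots,\vec a_{2n})$ along its $k$-th column shows that it equals $(\operatorname{adj}(H)\,\vec b)_k$, where $H:=(\vec a_1,\dots,\vec a_{2n})=(\tau_{i,\bar j})$ is the complex Hessian and $\operatorname{adj}$ is the adjugate. Hence, setting $\vec c:=\tp{(\tau_{\bar 1},\dots,\tau_{\overline{2n}})}$, the left-hand side of \eqref{e:mat_cc1} is exactly $\tp{\vec c}\,\operatorname{adj}(H)\,\vec b$. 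Since $s\ne 0$ forces $\tau\geqsl\tau_0=\abs{s}^2>0$ (the Remark following \eqref{e:local expression of seed}), Proposition \ref{p:det mat tau_xy} gives $\det H=\tau^{n-1}(2\tau-\tau_0)>0$ everywhere on $\pi_\lambda^{-1}(U_\alpha)$, so $H$ is invertible, $\operatorname{adj}(H)=(\det H)\,H^{-1}$, and the claim becomes
\[
	\tp{\vec c}\,H^{-1}\vec b = \frac{2\tau(\tau-\tau_0)}{2\tau-\tau_0}.
\]

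Next I would compute $H^{-1}$ by reusing the block decomposition \eqref{e:block form tau_xy}. The inverse ${a_{22}}^{-1}$ is already given by \eqref{e:1/a_22}, and the Schur complement $a_{11}-a_{12}{a_{22}}^{-1}a_{21}$ was shown in \eqref{e:a_11 etc} to be a rank-two perturbation of $\psi_2 1_n$; inverting it with the rank-two inversion formula of Lemma \ref{l:1/(1+rank-two mat)} produces $S^{-1}$ explicitly. Feeding ${a_{22}}^{-1}$ and $S^{-1}$ into the standard block-inverse formula then yields $H^{-1}$. The decisive simplification is that, by \eqref{e:tau_x}, the blocks of $\vec b$ and $\vec c$ lie in the two-dimensional spaces spanned by $z,\bar z,\tp\xi,\xi^*$, so every matrix–vector product collapses to scalars built from $\norm{z}^2,\norm{\xi}^2,\xi z,\overline{\xi z},s,\bar s$—precisely the quantities handled at the end of the proof of Proposition \ref{p:det mat tau_xy}.

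Finally I would assemble $\tp{\vec c}\,H^{-1}\vec b = \tp{\vec c_1}\,\vec u + \tp{\vec c_2}\,{a_{22}}^{-1}(\vec b_2-a_{21}\vec u)$, where $\vec u:=S^{-1}(\vec b_1-a_{12}{a_{22}}^{-1}\vec b_2)$ is the top block of $H^{-1}\vec b$ and $\vec b=\tp{(\tp{\vec b_1},\tp{\vec b_2})}$, $\vec c=\tp{(\tp{\vec c_1},\tp{\vec c_2})}$ are the block splittings, and then verify that the accumulated scalar expression reduces to $2\tau(\tau-\tau_0)/(2\tau-\tau_0)$; multiplying by $\det H=\tau^{n-1}(2\tau-\tau_0)$ gives \eqref{e:mat_cc1}. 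I expect the main obstacle to be exactly this last bookkeeping: organizing the numerous $\psi_1,\psi_2,s_{xz}$ terms so that the large cancellation producing the clean quotient actually occurs. As an independent cross-check one can instead use the rank-one identity $\tp{\vec c}\,\operatorname{adj}(H)\,\vec b=\det(H+\vec b\,\tp{\vec c})-\det H$ and evaluate $\det(H+\vec b\,\tp{\vec c})$ by the very same block computation together with Lemma \ref{l:det of 1+rank2mat}, which avoids forming $H^{-1}$ altogether.
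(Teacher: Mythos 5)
Your proposal is correct and is essentially the paper's own proof: since $\tau$ is real one has $\vec{c}=\overline{\vec{b}}$, so your adjugate identity $\tp{\vec{c}}\,\operatorname{adj}(H)\,\vec{b}$ coincides with the Cramer's-rule reduction to $(\det A'_\alpha)\,\vec{b}^{*}(A'_\alpha)^{-1}\vec{b}$ that the paper performs, and your block/Schur evaluation—reusing ${a_{22}}^{-1}$ from \eqref{e:1/a_22}, inverting the Schur complement \eqref{e:a_11 etc} by Lemma \ref{l:1/(1+rank-two mat)}, and collapsing all products to scalars in $\norm{z}^2,\norm{\xi}^2,\xi z,s$—is exactly how the paper arrives at $\vec{b}^{*}(A'_\alpha)^{-1}\vec{b}=(\tau-\tau_0)\bigl(1+\tfrac{\tau_0}{2\tau-\tau_0}\bigr)=\tfrac{2\tau(\tau-\tau_0)}{2\tau-\tau_0}$, the very target you identify. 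The only differences are cosmetic: the paper uses the Hermitian relations \eqref{e:hermiteness mat cc} together with $\tp{\vec{c}}=\vec{b}^{*}$ to split the quadratic form symmetrically into a Schur-complement term plus $b_2^{*}\,{a_{22}}^{-1}\,b_2$ (tidier than your asymmetric two-block assembly, though equivalent), and your suggested cross-check via $\det(H+\vec{b}\,\tp{\vec{c}})-\det H$ with Lemma \ref{l:det of 1+rank2mat} does not appear in the paper.
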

\begin{proof}
Set 
\(
	A'_\alpha := (\vec{a}_1,\dots,\vec{a}_{2n})
\)
and 
\[
	A'_{\alpha;k} (\vec{b}) :=( \vec{a}_1, \dots, \vec{a}_{k-1}, \hspace{-3pt}\overset{k\text{-th}}{\vec{b}}\hspace{-3pt},\vec{a}_{k+1},\dots,\vec{a}_{2n} )
	\qquad (k=1,\dots,2n)
\]
for brevity.
Then, the fact that 
\(
	\vec{x} = (1/\det A'_\alpha)\, {\vphantom{\Big( \Big)}}^t \!{\Big( \det A'_{\alpha;1}(\vec{b}), \dots, \det A'_{\alpha;2n}(\vec{b}) \Big)}
\) 
is the unique solution for $A'_\alpha \,\vec{x} = \vec{b}$ implies that 
\begin{align*}
	\sum_{k=1}^{2n} &\tau_{\bar k} \det( \vec{a}_1, \dots, \vec{a}_{k-1}, \vec{b},\vec{a}_{k+1},\dots,\vec{a}_{2n} )
	\\
	&= (\bar{\tau}_1,\bar{\tau}_2,\dots,\bar{\tau}_{2n}) {\vphantom{\Big( \Big)}}^t \!{\Big( \det A'_{\alpha;1}(\vec{b}), \dots, \det A'_{\alpha;2n}(\vec{b}) \Big)}
	\\
	&=(\det A'_\alpha)\, \vec{b}^* (A'_\alpha)^{-1} \vec{b},
\end{align*}
where we used the relation $\tau_{\bar k}=\bar{\tau}_k$.

The matrix decomposition
\begin{equation*}
\label{e:mat decomp UTU}
	A'_\alpha 
		= \begin{bmatrix} a_{11} & a_{12} \\[3pt] a_{21} & a_{22} \end{bmatrix} 
		= \begin{bmatrix} 1_n & a_{12}\, {a_{22}}^{-1}  \\[3pt]  0 & 1_n \end{bmatrix} 
				\begin{bmatrix} a_{11} - a_{12}\, {a_{22}}^{-1}\, a_{21} & 0 \\[3pt] 0 & a_{22} \end{bmatrix}
					\begin{bmatrix} 1_n & 0 \\[3pt] {a_{22}}^{-1}\, a_{21} & 1_n \end{bmatrix}
\end{equation*}
implies that 
\begin{equation}
\label{e:mat decomp UTU}
	(A'_\alpha)^{-1} 
		= \begin{bmatrix} 1_n & 0 \\[3pt] -{a_{22}}^{-1}\, a_{21} & 1_n \end{bmatrix} 
				\begin{bmatrix} (a_{11} - a_{12}\, {a_{22}}^{-1}\, a_{21})^{-1} & 0 \\[3pt] 0 & {a_{22}}^{-1} \end{bmatrix}
					\begin{bmatrix} 1_n & -a_{12}\, {a_{22}}^{-1}  \\[3pt]  0 & 1_n \end{bmatrix},
\end{equation}
and hence, using \eqref{e:hermiteness mat cc}, one finds
\begin{equation}
\label{e:b^* A' b}
\begin{aligned}
	\vec{b}^* (A'_\alpha)^{-1} \vec{b} 
		&= (b_1 - a_{12}\, {a_{22}}^{-1}\, b_2)^* (a_{11} - a_{12}\, {a_{22}}^{-1}\, a_{21})^{-1} (b_1 - a_{12}\, {a_{22}}^{-1}\, b_2) \\
		& \hspace{1.4em} + b_2^*\, {a_{22}}^{-1}\, b_2.
\end{aligned}
\end{equation}
Here we have written $\vec{b}$ into the block form as 
\begin{equation}
\label{e:submat vec b}
	\vec{b} =: \begin{bmatrix} b_1 \\[2pt] b_2 \end{bmatrix} \quad (b_1,b_2 \in \C^n).
\end{equation}
In view of \eqref{e:tau_x}, one finds that each block is given by
\begin{equation}
\label{e:b_1 and b_2}
\begin{aligned}
	b_1 &= \psi_2 \bar z - \psi_1 {\bar s}_{xz} \tp{\xi}, 
	\\
	b_2 &= \psi_1 ( \xi^* - {\bar s}_{xz} z ).
\end{aligned}
\end{equation}
Therefore, it follows from \eqref{e:1/a_22} and \eqref{e:b_1 and b_2} that the second term in the right-hand side of \eqref{e:b^* A' b} is given by
{\allowdisplaybreaks
\begin{align*}
	b_2^*\, {a_{22}}^{-1}\, b_2 
		&= \psi_1 (\xi - s_{xz} z^*) (1_n - {\psi_1}^{-1} z z^*) (\xi^* - \bar{s}_{xz} z)
			\\
		&= \psi_1 (\xi - s_{xz} z^*) (\xi^* - \bar s {\psi_1}^{-1} z)	
			\\
		&= \psi_1 \big(\norm{\xi}^2 - \bar s \xi z - s \overline{\xi z} + \abs{\xi z}^2 + \abs{s}^2  \norm{z}^2 {\psi_1}^{-1} \big)	
			\\
		&= \psi_1 \psi_2 - \abs{s}^2
			\\
		&= \tau - \tau_0.
\end{align*}
Similarly, 
\begin{align*}
	b_1 - a_{12}\, {a_{22}}^{-1}\, b_2 
		&= \psi_2 \bar{z} - \psi_1 \bar{s}_{xz} \tp{\xi} - (\tp{\xi} z^* + \bar{z} \xi - s {\psi_1}^{-1} \bar{z} z^*) (\xi^* - \bar{s}_{xz} z)
			\\
		&= \bar{s} {\psi_1}^{-1} ( s \bar{z} - \psi_1 \tp{\xi} ).
\end{align*}
}
Now, applying Lemma \ref{l:1/(1+rank-two mat)}, one finds that the inverse of the matrix \eqref{e:a_11 etc} is given by
\begin{equation*}
	(a_{11} - a_{12}\, {a_{22}}^{-1}\, a_{21})^{-1} 
		= {\psi_2}^{-1} \bigg( 
			1_n - \frac{\psi_1}{2 \psi_1 \psi_2 - \abs{s}^2} \Big( 
						\tp{\xi} \bar{\xi}\, -\, \bar{s}_{xz} \tp{\xi} \tp{z}\, -\, s_{xz}\, \bar{z}\, \bar{\xi} \,+\, (m \psi_1 \,-\,\norm{\xi}^2) \bar{z} \tp{z} 
															\Big)
						\bigg)
\end{equation*}
with $m= \psi_1^{-2}(\abs{s}^2 - \psi_1 \psi_2)$ as above.
Therefore, one sees that
\begin{align*}
	(a_{11} - & a_{12}\, {a_{22}}^{-1}\, a_{21})^{-1} ( b_1 - a_{12}\, {a_{22}}^{-1}\, b_2 )
			\\
		&= \frac{\bar s (\psi_1 \psi_2)^{-1}}{2 \psi_1 \psi_2 - \abs{s}^2} \bigg(
							(2 \psi_1 \psi_2 - \abs{s}^2 ) ( s \bar z - \psi_1 \tp{\xi}) 
							- \psi_1 (\bar \xi - \bar{s}_{xz} \tp{z}) (s \bar z - \psi_1 \tp{\xi}) \tp{\xi}
			\\
		&	\hspace{12.5em}
			- \psi_1 \big( -s_{xz} \bar \xi + (m \psi_1 - \norm{\xi}^2) \tp{z} \big) (s \bar z - \psi_1 \tp{\xi})\, \bar z\,
					\bigg)
			\\
		& =\frac{\bar s (\psi_1 \psi_2)^{-1}}{2 \psi_1 \psi_2 - \abs{s}^2} \bigg(
				  \Big\{ s(2\psi_1 \psi_2 - \abs{s}^2) - \psi_1 \big( -s_{xz} \bar\xi +(m \psi_1 - \norm{\xi}^2) \tp{z} \big) (s \bar z - \psi_1 \tp{\xi}) \Big\}\, \bar z
			\\
		&	\hspace{12.8em}
				 	- \Big\{ \psi_1(2 \psi_1 \psi_2 - \abs{s}^2) + \psi_1(\bar{\xi} - \bar{s}_{xz} \tp{z})(s \bar z - \psi_1 \tp{\xi} ) \Big\} \tp{\xi}\,
				\bigg)
			\\
		&= \frac{\bar s (\psi_1 \psi_2)^{-1}}{2 \psi_1 \psi_2 - \abs{s}^2} \big( {\psi_1}^2 \psi_2 (s - \xi z) \bar z - {\psi_1}^2 \psi_2 \tp{\xi} \big) 
			\\
		& = \frac{\bar{s} \psi_1}{2 \psi_1 \psi_2 - \abs{s}^2} (s_{xz} \bar z - \tp{\xi} ),
\end{align*}
and hence,
{\allowdisplaybreaks
\begin{align*}
	(b_1 - a_{12}\, {a_{22}}^{-1}\, & b_2)^*  (a_{11} - a_{12}\, {a_{22}}^{-1}\, a_{21})^{-1}  (b_1-a_{12}\, {a_{22}}^{-1}\, b_2)
		\\
		&= \frac{\abs{s}^2}{2 \psi_1 \psi_2 - \abs{s}^2} (\bar s \tp{z} - \psi_1 \bar \xi) (s_{xz} \bar z - \tp{\xi})
		\\
		&= \frac{\abs{s}^2}{2 \psi_1 \psi_2 - \abs{s}^2} \big( \psi_1 (\abs{s}^2 -\bar s \xi z -s \overline{\xi z} +\abs{\xi z}^2 + \norm{\xi}^2) - \abs{s}^2 \big)
		\\
		&=  \frac{\abs{s}^2}{2 \psi_1 \psi_2 - \abs{s}^2} (\psi_1 \psi_2 - \abs{s}^2)
		\\
		&= \frac{\tau_0}{2 \tau - \tau_0} (\tau - \tau_0).
\end{align*}
}
Thus, using the result of Proposition \ref{p:det mat tau_xy}, one obtains that
\begin{align*}
	(\det A'_\alpha)\, \vec{b}^* (A'_\alpha)^{-1} \vec{b} 
		&= \tau^{n-1} (2 \tau - \tau_0) \Big( 1+ \frac{\tau_0}{2 \tau - \tau_0} \Big) (\tau - \tau_0)
		\\
		&= 2 \tau^n (\tau - \tau_0).
\end{align*}
This completes the proof of the proposition, and hence of the theorem.
\end{proof}

One can easily solve the ODE given in \eqref{e:ricci-flat eq} to obtain a solution
\begin{equation}
\label{e:ricci-flat sol}
	f(\tau) = \left( \frac{\big( a + (\tau-\tau_0)^n \big)^{1/n}}{\tau (\tau - \tau_0)} \right)^{1/2}
\end{equation}
with $a$ a nonnegative constant.

Let us consider the solution $f(\tau)$ with $a=0$ in \eqref{e:ricci-flat sol}, i.e.~$f(\tau)=\tau^{-1/2}$.
Then one sees that the corresponding metric $g$ is positive-definite, and hence provides a Riemannian metric on $\ourZ$.
In fact, since $\ourZ$ is $\Gc$-homogeneous, it suffices to show that $g$ is positive-definite at a point of $\ourZ$, say $(z,\xi)=(0,0) \in \pi_\lambda^{-1}(U_\alpha) \subset \ourZ$.
Now, by \eqref{e:tau_x} and \eqref{e:tau_xy}, one has
\[
	A_\alpha|_{(z,\xi)=(0,0)} = \begin{bmatrix} \abs{s} 1_n & 0 \\[2pt] 0 & \abs{s}^{-1} 1_n \end{bmatrix},
\]
which is positive-definite.

\section{Hyperk\"ahler Metric}



In this section, we pick a hyperk\"ahler metric out of the Ricci-flat metric obtained in the previous section.
First, we recall the definition of a hyperk\"ahler manifold and a lemma due to Hitchin \cite{Hitchin87_monopole}:
\begin{definition}
Let $X$ be a $4n$-dimensional manifold.
Then a \emph{hyperk\"ahler structure} of $X$ consists of a Riemannian metric $g$ and a triple of almost complex structures $I,J,K$ satisfying the following conditions:
\begin{enumerate}{}{\setlength{\leftmargin}{10pt}}
\item
The metric $g$ is Hermitian with respect to all $I,J,K$:
\begin{equation}
\label{e:hermitian}
	g(Iv, Iw) = g(Jv, Jw) = g(Kv, Kw) = g(v,w)
\end{equation}
for $v,w \in {\mathfrak X}(X)$;

\item
The triple $(I,J,K)$ satisfies the quaternion identities: 
\begin{equation}
\label{e:quaternion}
	I^2 = J^2 = K^2 = IJK = -1.
\end{equation}
Moreover, the endomorphisms $I,J,K$ are covariant constant:
\begin{equation}
\label{e:covariant constant}
	\nabla I = \nabla J = \nabla K = 0,
\end{equation}
where $\nabla$ denotes the covariant derivative of the Levi-Civita connection of $g$.
\end{enumerate}

\end{definition}

Note that the condition \eqref{e:covariant constant} ensures the integrability of $I,J,K$, which is equivalent to their closedness of the associated 2-forms by the following lemma.

\begin{lemma}[\cite{Hitchin87_monopole}]
Let $(X,g)$ be a Riemannian manifold equipped with skew-adjoint endomorphisms $I,J,K$ of the tangent bundle $TX$ satisfying \eqref{e:hermitian} and \eqref{e:quaternion}.
Then $(X,g,I,J,K)$ is hyperk\"ahler if and only if the 2-forms $\omega_I, \omega_J, \omega_K$ that are determined by \eqref{e:2-forms} below are closed:
\begin{equation}
\label{e:2-forms}
	\omega_I(v,w) := g(Iv,w), \quad \omega_J(v,w) := g(Jv,w), \quad \omega_K(v,w) := g(Kv,w)  
\end{equation}
for $v,w \in {\mathfrak X}(X)$.
\end{lemma}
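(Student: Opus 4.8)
The plan is to prove both implications of the equivalence. The statement is a characterization of the hyperkähler condition purely in terms of the closedness of the three fundamental 2-forms, under the standing hypotheses that $I,J,K$ are skew-adjoint and satisfy the Hermitian compatibility \eqref{e:hermitian} together with the quaternion relations \eqref{e:quaternion}. The forward direction is essentially immediate: if $(X,g,I,J,K)$ is hyperkähler, then by definition the endomorphisms are covariant constant, $\nabla I = \nabla J = \nabla K = 0$. Since $g$ itself is parallel for the Levi-Civita connection, each 2-form $\omega_I(v,w)=g(Iv,w)$ is parallel, and any parallel form is in particular closed. So the first step is merely to record that $\nabla \omega_I = \nabla \omega_K = \nabla \omega_K = 0$ forces $\rmd \omega_I = \rmd \omega_J = \rmd \omega_K = 0$.

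The substantive direction is the converse: assuming the three 2-forms are closed, I must recover the covariant-constancy \eqref{e:covariant constant}. The key idea is to reduce everything to the integrability of a single almost complex structure. First I would fix one of the three, say $I$, and observe that the closedness of $\omega_I$ together with the Hermitian compatibility $\omega_I(Iv,Iw)=\omega_I(v,w)$ means $\omega_I$ is a closed $(1,1)$-form of maximal rank for $I$, i.e. a Kähler form \emph{provided} $I$ is integrable. The crucial point — and this is where the real content lies — is that the integrability of $I$ is not assumed but must be extracted. Here I would invoke the classical fact that for an almost Hermitian manifold whose fundamental form is closed, integrability of the complex structure is governed by the Nijenhuis tensor, and that the closedness of \emph{all three} 2-forms, combined with the quaternion identities, forces the Nijenhuis tensors of $I$, $J$, $K$ to vanish simultaneously.

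Concretely, the mechanism I would use is the following. The three 2-forms assemble, via the quaternionic structure, into a single holomorphic symplectic form: with respect to the complex structure $I$, the combination $\omega_J + \ai\, \omega_K$ is a $(2,0)$-form, and its closedness $\rmd(\omega_J + \ai \omega_K)=0$ is exactly the hypothesis. A closed nondegenerate $(2,0)$-form on an almost complex manifold is automatically of type $(2,0)$ with respect to an integrable structure — more precisely, the existence of such a closed complex-valued form, holomorphic symplectic for $I$, forces the $\antidel$-operator for $I$ to square appropriately and pins down the Nijenhuis tensor to zero. The cleanest route is therefore: deduce from $\rmd \omega_I = 0$ and $\rmd(\omega_J+\ai\omega_K)=0$ that each $\omega$ is parallel, by showing that the torsion of the induced connection vanishes. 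I expect the main obstacle to be precisely this step — converting the first-order, pointwise algebraic closedness conditions into the statement that the Nijenhuis tensors vanish and hence that $I,J,K$ are parallel — since it requires a genuine Weitzenböck-type or Nijenhuis-tensor argument rather than a formal manipulation.

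Since this lemma is attributed to Hitchin \cite{Hitchin87_monopole} and is invoked here as a known tool rather than reproved, I would in practice simply cite it, noting that the delicate integrability argument is carried out there; the two observations worth recording for the reader are that \eqref{e:covariant constant} is what guarantees integrability of $I,J,K$, and that under \eqref{e:hermitian} and \eqref{e:quaternion} this integrability is detected entirely by the closedness of $\omega_I,\omega_J,\omega_K$, so that in the sequel it will suffice to verify $\rmd \omega_I = \rmd \omega_J = \rmd \omega_K = 0$ for the candidate metric.
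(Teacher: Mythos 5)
Your proposal matches the paper exactly where it matters: the paper states this lemma with attribution to \cite{Hitchin87_monopole} and gives no proof of its own, so your decision to cite rather than reprove is precisely what the paper does. Your sketch of the underlying argument is also sound --- parallel forms are closed for the forward direction, and for the converse the quaternion identities make $\omega_J + \ai\,\omega_K$ a nondegenerate $(2,0)$-form with respect to $I$ whose closedness forces the Nijenhuis tensor of $I$ to vanish, after which closedness of each fundamental form together with integrability yields \eqref{e:covariant constant} by the standard K\"ahler criterion (a known identity expressing $\nabla \omega$ through $\rmd\omega$ and the Nijenhuis tensor, rather than a Weitzenb\"ock argument).
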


Now, we return to our case.
Let $\omega_J$ (resp. $\omega_K$) be the real part (resp. the imaginary part) of the holomorphic symplectic form $\omega_+$ whose local expression is given by \eqref{e:local expression of omega_+}:
\begin{equation}
	\omega_{+} = \omega_J + \ai\, \omega_K.
\end{equation}
Under the convention \eqref{e:convention about idx}, they are given by
\begin{align}
	\omega_J|_{\pi^{-1}_\lambda(U_\alpha)} &= \frac12 \sum_{i=1}^n ( \rmd z_i \wedge \rmd z_{n+i} + \rmd \bar z_i \wedge \rmd \bar z_{n+i} ),	\\
	\omega_K|_{\pi^{-1}_\lambda(U_\alpha)} &= -\frac{\ai}2 \sum_{i=1}^n ( \rmd z_i \wedge \rmd z_{n+i} - \rmd \bar z_i \wedge \rmd \bar z_{n+i} )
\end{align}
in terms of local coordinates.
Similarly, $\omega_I$ is locally given by 
\begin{equation}
	\omega_I|_{\pi^{-1}_\lambda(U_\alpha)} = \frac{\ai}2 \sum_{i,j=1}^{2n} g_{i \bar j}\, \rmd z_i \wedge \rmd \bar z_j.
\end{equation}

For a non-degenerate bilinear form $\omega$ on $T_p \ourZ$, $p \in \ourZ$, let $\omega^\flat$ denote the isomorphism from $T_p \ourZ$ onto $T_p^* \ourZ$ defined by $\omega^\flat(v)=\iota_v \omega$ with $\iota$ the interior product, i.e.
\[
	\omega^\flat(v)(w) =\omega (v,w) \quad (v,w \in T_p \ourZ).
\]
%
%
\begin{definition}
\label{d:cpx str J and K}
Let $J$ and $K$ be elements of $\Gamma(\End(T \ourZ))$ determined by $\omega_I,\,\omega_J$ and $\omega_K$ as follows:
\begin{equation}
	J = (\omega^\flat_I)^{-1} \circ \omega^\flat_K, \quad 
	K = (\omega^\flat_J)^{-1} \circ \omega^\flat_I.
\end{equation}
Furthermore, we also denote the canonical extensions of $I,J,K \in \Gamma(\End(T \ourZ))$ to $\Gamma(\End(\Tc \ourZ))$ by the same letters respectively, where $\Tc \ourZ := T \ourZ \otimes \C$.
\end{definition}

It is immediate to show that if $I^2 = J^2 = K^2 = IJK = -1$, then
\begin{equation}
\label{e:omega_J and omega_K}
	\omega_J(v,w) = g( Jv, w ), \quad \omega_K( v,w ) = g( Kv, w ),
\end{equation}
and
\begin{equation}
\label{e:hermitiancy of g wrt J and K}
	g(Jv, Jw) = g(Kv, Kw) = g(v,w)
\end{equation}
for $v,w \in {\mathfrak X}(\ourZ)$.
Needless to say, one has
\begin{equation}
\label{e:hermitiancy of g wrt I}
	g(Iv, Iw) = g(v,w) 	\quad ( v,w \in {\mathfrak X}(\ourZ) )
\end{equation}
by definition.

Under the convention \eqref{e:convention about idx}, let us introduce a basis $\Sigma$ for $\Tc \ourZ|_{\pi_\lambda^{-1}(U_\alpha)}$ given by
\[
	\Sigma = \big\{ \del_1, \dots, \del_{2n}, \antidel_{1},\dots, \antidel_{2n} \big\},
\]
where $\del_i := \del_{z_i}, \antidel_{i} := \del_{\bar z_i}$ $(i=1,\dots,2n)$.

%
%
\begin{proposition}
\label{p:J^2 and K^2}
Let $A_\alpha=(g_{i \bar j})_{i,j=1,\dots,2n}$ given by \eqref{e:matrix corresponding to g} with $f(\tau)$ general, and $J,K \in \Gamma(\End(\Tc \ourZ))$ be as in Definition \ref{d:cpx str J and K}.
Then one has $J^2 = K^2 = -1$ if and only if 
\begin{equation}
\label{e:condition Sp2n}
	\tp{A_\alpha} S A_\alpha = S, 
\end{equation}
where $S=\Big[ \begin{smallmatrix} 0 & 1_n \\[1.8pt] -1_n & 0\end{smallmatrix} \Big]$.
\end{proposition}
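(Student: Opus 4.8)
The plan is to compute everything in the complex frame $\Sigma=\{\del_1,\dots,\del_{2n},\antidel_1,\dots,\antidel_{2n}\}$, representing each $2$-form $\omega$ by the matrix $M_\omega$ whose $(a,b)$-entry is $\omega$ evaluated on the $a$-th and $b$-th frame vectors. Reading off the local expressions of $\omega_I,\omega_J,\omega_K$ and using the pairings $\rmd z_i\wedge\rmd\bar z_j(\del_k,\antidel_l)=\delta_{ik}\delta_{jl}$ and the like, I would obtain, in $2n\times 2n$ blocks,
\[
	M_{\omega_I}=\frac{\ai}2\begin{bmatrix}0 & A_\alpha\\ -\tp{A_\alpha} & 0\end{bmatrix},\qquad
	M_{\omega_J}=\frac12\begin{bmatrix}S & 0\\ 0 & S\end{bmatrix},\qquad
	M_{\omega_K}=-\frac{\ai}2\begin{bmatrix}S & 0\\ 0 & -S\end{bmatrix}.
\]
Each $M_\omega$ is antisymmetric, so the map $\omega^\flat(v)=\iota_v\omega$ sends the coordinate column of a tangent vector $v$ to the coordinate column $\tp{M_\omega}\,v=-M_\omega\,v$ of the resulting covector.

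Next I would translate Definition \ref{d:cpx str J and K} into matrix products. Since $\omega^\flat$ is represented by $-M_\omega$, the minus signs cancel in both composites, giving (with $[\,\cdot\,]$ the matrix in the frame $\Sigma$)
\[
	[J]=M_{\omega_I}^{-1}M_{\omega_K},\qquad [K]=M_{\omega_J}^{-1}M_{\omega_I}.
\]
Here $A_\alpha$ is invertible because $\omega_I^\flat$ must be an isomorphism for Definition \ref{d:cpx str J and K} to make sense, and $S^{-1}=-S$ since $S^2=-1_{2n}$. Inverting the block-antidiagonal $M_{\omega_I}$ and multiplying out, the scalar factors $\pm\ai/2$ combine to produce
\[
	[J]=\begin{bmatrix}0 & -(\tp{A_\alpha})^{-1}S\\ -A_\alpha^{-1}S & 0\end{bmatrix},\qquad
	[K]=-\ai\begin{bmatrix}0 & S A_\alpha\\ -S\tp{A_\alpha} & 0\end{bmatrix}.
\]

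Finally I would square these matrices in block form, obtaining
\[
	[J]^2=\begin{bmatrix}(\tp{A_\alpha})^{-1}SA_\alpha^{-1}S & 0\\ 0 & A_\alpha^{-1}S(\tp{A_\alpha})^{-1}S\end{bmatrix},\qquad
	[K]^2=\begin{bmatrix}SA_\alpha S\tp{A_\alpha} & 0\\ 0 & S\tp{A_\alpha}SA_\alpha\end{bmatrix}.
\]
The $(2,2)$-block of $[K]^2=-1_{4n}$ reads $S\tp{A_\alpha}SA_\alpha=-1_{2n}$, which upon left-multiplication by $S^{-1}=-S$ is exactly the condition $\tp{A_\alpha}SA_\alpha=S$ of \eqref{e:condition Sp2n}. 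Conversely, \eqref{e:condition Sp2n} is equivalent to $A_\alpha S\tp{A_\alpha}=S$ (transpose and use $\tp S=-S$, or invert using $S^{-1}=-S$) and to $\tp{A_\alpha}=-SA_\alpha^{-1}S$; substituting these relations back shows that every one of the four diagonal blocks above equals $-1_{2n}$, so that $J^2=K^2=-1$. The only points demanding care are the bookkeeping of the factors $\pm\ai/2$ and the two sign rules $S^{-1}=-S$, $\tp S=-S$, together with the elementary equivalence $\tp{A_\alpha}SA_\alpha=S\iff A_\alpha S\tp{A_\alpha}=S$; no analytic difficulty intervenes, since the assertion holds pointwise and purely algebraically for each invertible $A_\alpha$.
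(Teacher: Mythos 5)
Your proposal is correct and takes essentially the same route as the paper: both compute the matrices of $J$ and $K$ in the frame $\Sigma$ (your $[J]$ and $[K]$ coincide exactly with the paper's \eqref{e:matrices J and K}, using $S^{-1}=-S$ and $\tp{(A_\alpha^{-1})}=(\tp{A_\alpha})^{-1}$) and then reduce $J^2=K^2=-1$ to block-matrix algebra giving $\tp{A_\alpha} S A_\alpha = S$. The only differences are organizational—you encode the flat maps via the Gram matrices $-M_\omega$ of the three $2$-forms rather than solving the componentwise equations $\omega^\flat_I(Jv)=\omega^\flat_K(v)$ as the paper does, and you write out the converse block verifications (via $\tp{A_\alpha} S A_\alpha = S \iff A_\alpha S \tp{A_\alpha} = S$) that the paper leaves implicit.
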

\begin{proof}
Denote by $\mathbb J$ and $\mathbb K$ the $4n \times 4n$ matrices that correspond to $J$ and $K$ with respect to the basis $\Sigma$ for $\Tc \ourZ|_{\pi_\lambda^{-1}(U_\alpha)}$.
Then one can show that they are given by
\begin{subequations}
\label{e:matrices J and K}
\begin{align}
	\mathbb J &= \begin{bmatrix} 0 & \tp{A^{-1}_\alpha} S^{-1} \\ A^{-1}_\alpha S^{-1} & 0 \end{bmatrix},
		\label{e:matrix J}
		\\
	\mathbb K &= \begin{bmatrix} 0 & - \ai S A_\alpha \\ \ai S \tp{A_\alpha} & 0 \end{bmatrix},
		\label{e:matrix K}
\end{align}
\end{subequations}
from which it follows immediately that ${\mathbb J}^2={\mathbb K}^2=-1$ if and only if $\tp{A_\alpha} S A_\alpha=S$.

Now, we will prove \eqref{e:matrices J and K}.
If one wrtites $v,\,J v \in \Tc_p \ourZ$ as
{\allowdisplaybreaks
\begin{align*}
	v	&=\sum_{i=1}^{2n} \epsilon_i \, \del_i + \sum_{i=1}^{2n} \epsilon^-_i \, \antidel_i 
		= (\del_1,\dots,\del_{2n},\antidel_1,\dots, \antidel_{2n}) 
			\begin{bmatrix} \vec{\epsilon}^+ \\ \vec{\epsilon}^- \end{bmatrix},
			\\
	Jv	&=\sum_{i=1}^{2n} \eta_i \, \del_i + \sum_{i=1}^{2n} \eta^-_i \, \antidel_i 
		= (\del_1,\dots,\del_{2n},\antidel_1,\dots, \antidel_{2n}) 
			\begin{bmatrix} \vec{\eta}^+ \\ \vec{\eta}^- \end{bmatrix}
\end{align*}
with $\vec{\epsilon}^+=\tp{(\epsilon_1,\dots,\epsilon_{2n})}, \vec{\epsilon}^-=\tp{(\epsilon^-_1,\dots,\epsilon^-_{2n})},\, \vec{\eta}^+=\tp{(\eta_1,\dots,\eta_{2n})}, \vec{\eta}^-=\tp{(\eta^-_1,\dots,\eta^-_{2n})}$, 
then the definition that $J v = (\omega^\flat_I)^{-1} \circ \omega^\flat_K (v)$, i.e.~$\omega^\flat_I(J v) = \omega^\flat_K (v)$ ($v \in \Tc_p \ourZ$) reads
\begin{align*}
	\epsilon_i &= \sum_{j=1}^{2n} g_{n+i,\bar j} \, \eta^-_j, & \epsilon_{n+i} &= - \sum_{j=1}^{2n} g_{i,\bar j} \, \eta^-_j,
		\\
	\epsilon^-_i &= \sum_{j=1}^{2n} g_{j, \overline{n+i}} \, \eta_j, & \epsilon^-_{n+i} &= - \sum_{j=1}^{2n} g_{j,\bar i} \, \eta_j
\end{align*}
}
for $i=1,\dots, n$, which can be written as
\begin{equation*}
	\vec{\epsilon}^+= S A_\alpha \, \vec{\eta}^-, \quad \vec{\epsilon}^-= S \tp{A_\alpha} \, \vec{\eta}^+.
\end{equation*}
Therefore, one has
\begin{equation*}
	\vec{\eta}^+= \tp{A_\alpha}^{-1} S^{-1} \vec{\epsilon}^-, \quad \vec{\eta}^-= A_\alpha^{-1} S^{-1} \vec{\epsilon}^+.
\end{equation*}
Namely,
\begin{equation*}
	\begin{bmatrix} \vec{\eta}^+ \\ \vec{\eta}^-\end{bmatrix}
		= \begin{bmatrix} 0 & \tp{A_\alpha}^{-1} S^{-1} \\ A_\alpha^{-1} S^{-1} & 0 \end{bmatrix}
			\begin{bmatrix} \vec{\epsilon}^+ \\ \vec{\epsilon}^- \end{bmatrix}.
\end{equation*}
Hence one obtains \eqref{e:matrix J}.

Similarly, writing $K v$ as
\[
	K v=\sum_{i=1}^{2n} \eta_i \, \del_i + \sum_{i=1}^{2n} \eta^-_i \, \antidel_i 
		= (\del_1,\dots,\del_{2n},\antidel_1,\dots, \antidel_{2n}) 
			\begin{bmatrix} \vec{\eta}^+ \\ \vec{\eta}^- \end{bmatrix},
\]
one finds that $\omega^\flat_J(K v) = \omega^\flat_I (v)$ reads
\begin{align*}
	\eta_i &= -\ai \sum_{j=1}^{2n} g_{n+i,\bar j} \, \epsilon_j, & \eta_{n+i} &= \ai \sum_{j=1}^{2n} g_{i,\bar j} \, \epsilon_j,
		\\
	\eta^-_i &= \ai \sum_{j=1}^{2n} g_{j, \overline{n+i}} \, \epsilon_j, & \eta^-_{n+i} &= - \ai \sum_{j=1}^{2n} g_{j,\bar i} \, \epsilon_j
\end{align*}
for $i=1,\dots, n$, which can be written as
\begin{equation*}
	\vec{\eta}^+= -\ai S A_\alpha \, \vec{\epsilon}^-, \quad \vec{\eta}^-= \ai S\, \tp{A_\alpha} \, \vec{\epsilon}^+.
\end{equation*}
Namely,
\begin{equation*}
	\begin{bmatrix} \vec{\eta}^+ \\ \vec{\eta}^- \end{bmatrix}
		= \ai \begin{bmatrix} 0 & -S A_\alpha \\ S\, \tp{A_\alpha} & 0 \end{bmatrix}
				\begin{bmatrix} \vec{\epsilon}^+ \\ \vec{\epsilon}^- \end{bmatrix}.
\end{equation*}
Hence one obtains \eqref{e:matrix K}.
\end{proof}

\begin{remark}
\label{r:cpx str I, Sp & E-H}
%
(i)\;
It is easy to verify that the canonical complex structure $I$ on $\ourZ$ satisfies $I = (\omega^\flat_K)^{-1} \circ \omega^\flat_J$, and that the matrix $\mathbb I$ corresponding to $I$ with respect to the basis $\Sigma$
for $\Tc \ourZ|_{\pi_\lambda^{-1}(U_\alpha)}$ is given by
\begin{equation}
\label{e:matrix I}
	\mathbb I= \ai \begin{bmatrix} 1_{2n} & 0 \\  0 & -1_{2n} \end{bmatrix}.
\end{equation}
It follows from \eqref{e:matrices J and K} and \eqref{e:matrix I} that ${\mathbb I}\,{\mathbb J}\,{\mathbb K}=-1$, hence $IJK=-1$, regardless of whether $A_\alpha$ satisfies \eqref{e:condition Sp2n} or not. 
%

%
(ii)\;
Eq.~\eqref{e:condition Sp2n} is nothing but the condition that $A_\alpha \in \Sp(2n,\C)$, \emph{the complex symplectic group}, where
\begin{equation*}
\Sp(2n,\C) = \big\{ g \in \GL{2n}(\C); \tp{g} S g = S \big\}
\end{equation*}
with \( S = \left[ \begin{smallmatrix} 0 & 1_n \\[2pt] -1_n & 0 \end{smallmatrix} \right] \).
Therefore, when $n=1$, all solutions given in \eqref{e:ricci-flat sol} with $a \geqsl 0$ provide hyperk{\"a}hler metrics on $\ourZ$ outside the Lagrangian submanifold $\ourLSmfd$ given by \eqref{e:Laglangian submfd}, 
since $\Sp(2,\C)=\SL{2}(\C)$. 
Note that if $\lambda$ tends to $0$ (i.e.~$s \to 0$), then the twisted cotangent bundle $\ourZ=(T^*\CP^1)_\lambda$ reduces to the cotangent bundle $T^*\CP^1$ (as we remarked in Remark \ref{r:ansatz on beta} above),
and the corresponding metric with $a>0$ coincides with the Eguchi-Hanson metric (\cite{EguchiHanson79}; see also \cite{KapustinSaulina09} for more details).

\end{remark}



Recall that our symplectic form $\omega_I$ is given by $\omega_I=\rmd \beta$ with 
\(
	\beta =  \frac{1}4 f(\tau)\, \rmd^c \tau 
\) and that the metric $g$ defined by
\begin{equation*}
	g(v,w) = \omega_I (v, I w)	\quad ( v,w \in {\mathfrak X}(\ourZ) )
\end{equation*} 
is Ricci-flat if and only if $f$ is of the form
\begin{equation}
\label{e:cicci-flat sol}
	f(\tau) = \left( \frac{\big( a + (\tau-\tau_0)^n \big)^{1/n}}{\tau (\tau - \tau_0)} \right)^{1/2}
\end{equation}
with $a$ a nonnegative constant, where $\tau_0=\abs{s}^2$.

\begin{theorem}
\label{t:hk_sol}
The metric $g$ defined by \eqref{e:def of metric g} provides a hyperk{\"a}hler metric on $\ourZ$ if and only if $a=0$, i.e.~if and only if $f(\tau)$ is given by
\begin{equation}
\label{e:hk_sol}
	f(\tau) = \tau^{-1/2}.
\end{equation}
 
\end{theorem}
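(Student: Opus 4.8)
The plan is to combine Proposition~\ref{p:J^2 and K^2} with the explicit Ricci-flat solutions \eqref{e:cicci-flat sol}. By Proposition~\ref{p:J^2 and K^2} (together with the hermiticity \eqref{e:hermitiancy of g wrt I}, \eqref{e:hermitiancy of g wrt J and K} and the identity $IJK=-1$ noted in Remark~\ref{r:cpx str I, Sp & E-H}(i), which holds regardless of $A_\alpha$), the metric $g$ is hyperk\"ahler precisely when the Hermitian matrix $A_\alpha$ satisfies the complex symplectic condition $\tp{A_\alpha}\, S\, A_\alpha = S$ for all $\alpha$ at every point of $\ourZ$. So the whole theorem reduces to deciding, for which value of the constant $a\geqsl 0$, the matrix $A_\alpha$ built from $f$ in \eqref{e:cicci-flat sol} lies in $\Sp(2n,\C)$. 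Since $\ourZ$ is $\Gc$-homogeneous and $g$ is $\Gu$-invariant but the condition must hold everywhere, I would exploit the explicit block structure already computed in Section~3 rather than a single base point.

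First I would write $A_\alpha = c\, A'_\alpha + c_1\, \vec b\, \vec b{}^{\,*}$, where $c=f(\tau)$, $c_1=f'(\tau)$, $A'_\alpha=(\vec a_1,\dots,\vec a_{2n})$ is the Hermitian matrix of second derivatives $\tau_{i,\bar j}$ from \eqref{e:submat cc}, and $\vec b\, \vec b{}^{\,*}$ is the rank-one piece coming from $\tau_i\tau_{\bar j}$ (note $\vec b{}^{\,*}=(\tau_{\bar 1},\dots,\tau_{\overline{2n}})$). Substituting into $\tp{A_\alpha}\, S\, A_\alpha - S$ and expanding yields three types of terms: the purely quadratic-in-$c$ term $c^2\,\tp{A'_\alpha}\, S\, A'_\alpha$, the cross terms linear in $c_1$, and the rank-one-squared term in $c_1^2$. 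The strategy is to evaluate each of these using the concrete blocks $a_{ij}$ and $b_1,b_2$ from \eqref{e:submat cc} and \eqref{e:b_1 and b_2}, reducing everything to scalar functions of the two invariants $\psi_1,\psi_2$ (equivalently $\tau$ and $\tau_0$). Because $A'_\alpha$ and $\vec b$ are already expressed through $z,\xi,s_{xz}$, these are finite rank-two linear-algebra computations of exactly the kind handled in the Appendix, so I expect $\tp{A_\alpha}\, S\, A_\alpha = S$ to collapse to a single scalar identity relating $c$, $c_1$, $\tau$ and $\tau_0$.

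Once that scalar identity is in hand, I would impose it on the Ricci-flat family. Writing $f(\tau)^2 = \big(a+(\tau-\tau_0)^n\big)^{1/n}\big/\big(\tau(\tau-\tau_0)\big)$ and differentiating to get $c_1=f'(\tau)$, I substitute into the symplectic scalar condition and simplify. The Ricci-flat ODE \eqref{e:ricci-flat eq} should be used to eliminate one of the combinations $f'f^{2n-1}$ or $f^{2n}$, so that the symplectic condition becomes an algebraic constraint on $a$ alone (independent of $\tau$, since it must hold identically). I anticipate this constraint forces the $a$-dependent term $(a+(\tau-\tau_0)^n)^{1/n}$ to equal $\tau-\tau_0$ identically in $\tau$, which happens exactly when $a=0$, giving $f(\tau)=\tau^{-1/2}$; conversely, for $f=\tau^{-1/2}$ one checks directly that $\tp{A_\alpha}\, S\, A_\alpha=S$.

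The main obstacle I foresee is the middle step: verifying that $\tp{A_\alpha}\, S\, A_\alpha=S$ genuinely reduces to one clean scalar equation in $\tau,\tau_0,c,c_1$, rather than a matrix-valued condition with several independent components. This is where the special rank structure of $A'_\alpha$ and $\vec b$ must conspire: the symplectic form $S$ pairs the $z$-block with the $\xi$-block, and I will need the mixed blocks $a_{12},a_{21}$ together with $b_1,b_2$ to cancel in just the right way. Controlling this cancellation — likely via the Appendix identities on rank-$2$ matrices and the relations \eqref{e:tau_x}, \eqref{e:tau_xy} — is the crux, and I would organize it as a separate lemma (analogous to Propositions~\ref{p:det mat tau_xy} and \ref{p:det mat tau_x tau_y}) before feeding the result into the Ricci-flat family to isolate $a=0$.
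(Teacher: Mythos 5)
Your proposal follows the paper's proof essentially step for step: reduce via Proposition \ref{p:J^2 and K^2} and Remark \ref{r:cpx str I, Sp \& E-H}(i) to the condition $\tp{A_\alpha}\, S\, A_\alpha = S$, decompose $A_\alpha = c\,A'_\alpha + c_1 \vec{b}\,\vec{b}^{\,*}$, and evaluate the symplectic condition blockwise with the explicit $a_{ij}$, $b_1$, $b_2$ and the Appendix rank-two identities, forcing $a=0$. The only refinements the paper adds at the step you flag as the crux are that the $c_1^2$ term vanishes for free (since $\tp{\vec{b}}\, S\, \vec{b} = 0$) and that the condition collapses not to a single scalar identity but to a few blockwise scalar conditions --- the $(2,2)$-block vanishes identically, the $(1,1)$-block is a scalar multiple of $z\bar{\xi} - \xi^{*}\, \tp{z}$, and the $(1,2)$-block equals $1_n$ plus a scalar multiple of $\xi^{*}(\xi - s_{xz} z^{*})$, whose $1_n$-coefficient forces $c^2 \tau = 1$ and hence $a=0$ --- exactly the cancellation your proposed lemma would establish.
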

\begin{proof}
By Proposition \ref{p:J^2 and K^2} and Remark \ref{r:cpx str I, Sp & E-H} (i), it suffices to show that $\tp{A_\alpha} S A_\alpha = S$ if and only if $a=0$.
Let us rewrite \eqref{e:mat 4 metric g} as
\begin{equation}
	A_\alpha = c A'_\alpha + c_1 A''_\alpha
\end{equation}
with $c=f(\tau),\, c_1=f'(\tau)$, and
\begin{align*}
	A'_\alpha &= (\tau_{i,\bar j})_{i,j=1, \dots, 2n} = (\vec{a}_1, \vec{a}_2,\dots, \vec{a}_{2n}), \\
	A''_\alpha &= (\tau_i \tau_{\bar j})_{i,j=1,\dots, 2n} = ( \tau_{\bar 1} \vec{b}, \tau_{\bar 2} \vec{b}, \dots, \tau_{\overline{2n}}\, \vec{b} ),
\end{align*}
where $\vec{a}_i, \vec{b}$ are given by \eqref{e:mat g by tau_xy} and \eqref{e:mat g by tau_x} respectively.
Since $A''_\alpha=\vec{b} {\vec{b}}^*$ and $\tp{\vec{b}} S \vec{b} = 0$, one has
\begin{equation}
	\tp{A''_\alpha}\, S\, A''_\alpha = 0.
\end{equation}
Therefore, one sees that 
{\allowdisplaybreaks
\begin{align}
\label{e:Sp condition}
	\tp{A_\alpha} S A_\alpha 
		&= c^2 \,\tp{A'_\alpha} S A'_\alpha + c c_1 ( \tp{A'_\alpha} S A''_\alpha + \tp{A''_\alpha} S A'_\alpha)
		\notag
		\\
		&= c^2 \,\tp{A'_\alpha} S A'_\alpha + c c_1 \Big( \tp{A'_\alpha} S A''_\alpha -\tp{ ( \tp{A''_\alpha} S A'_\alpha ) } \Big)
\end{align}
}
since $\tp{A''_\alpha} S A'_\alpha=-\tp{(\tp{A'_\alpha} S A''_\alpha)}$.
Using the notations in the proof of Propositions \ref{p:det mat tau_xy} and \ref{p:det mat tau_x tau_y}, i.e.
\begin{equation*}
	A'_\alpha=\begin{bmatrix} a_{11} & a_{12} \\ a_{21} & a_{22} \end{bmatrix},
		\qquad
	A''_\alpha=\begin{bmatrix} b_1 {b_1}^* & b_1 {b_2}^* \\[2pt] b_2 {b_1}^* & b_2 {b_2}^* \end{bmatrix}
\end{equation*}
with $a_{ij} \in \Mat{n \times n}(\C), b_i \in \C^n$ ($i,j=1,2$), 
one sees 
{\allowdisplaybreaks
\begin{align*}
	\tp{A'_\alpha} S A'_\alpha 
		&= 	\begin{bmatrix} 
				\tp{a_{11}} a_{21} - \tp{a_{21}} a_{11} & \tp{a_{11}} a_{22} - \tp{a_{21}} a_{12} \\[3pt] 
				\tp{a_{12}} a_{21} - \tp{a_{22}} a_{11} & \tp{a_{12}} a_{22} - \tp{a_{21}} a_{12} 
			\end{bmatrix},
		\\
	\tp{A'_\alpha} S A''_\alpha 
		&= 	\begin{bmatrix} 
				( \tp{a_{11}} b_{2} - \tp{a_{21}} b_1 ) {b_1}^* &  ( \tp{a_{11}} b_2 - \tp{a_{21}} b_1 ) {b_2}^* \\[3pt] 
				( \tp{a_{12}} b_{2} - \tp{a_{22}} b_1 ) {b_1}^* &  ( \tp{a_{12}} b_2 - \tp{a_{22}} b_1 ) {b_2}^* 
			\end{bmatrix}.	
\end{align*}
}
Recall from \eqref{e:submat cc} and \eqref{e:b_1 and b_2} that
{\allowdisplaybreaks
\begin{align*}
	a_{11} &= \psi_2 1_n - s_{xz}\, \bar z \, \bar \xi - \bar s_{xz} \tp{\xi} \tp{z} + \psi_1 \tp{\xi} \, \bar \xi,	
		&
	a_{12} &= \psi_1 \tp{\xi} \, z^* + \bar z \, \xi - s_{xz} \, \bar z \, z^*, \\
	a_{21} &= \psi_1 z \, \bar \xi + \xi^* \tp{z} - \bar s_{xz} \, z \, \tp{z}, 
		&
	a_{22} &= \psi_1 (1_n + z \, z^*),	\\
	b_1 &= \psi_2 \bar z - \psi_1 {\bar s}_{xz} \tp{\xi}, 
		&
	b_2 &= \psi_1 ( \xi^* - {\bar s}_{xz} z ).
\end{align*}
}
Thus, simple matrix calculations show that the $(1,1)$-, $(1,2)$-, and $(2,2)$-blocks of $\tp{A'} S A'$ are given by
\begin{subequations}
\label{e:A'SA'}
\renewcommand{\theequation}{\theparentequation}
{\allowdisplaybreaks
\begin{align}
	\tp{a_{11}} a_{21} - \tp{a_{21}} a_{11} &= (s_{xz} \overline{\xi z} - \norm{\xi}^2) (z \bar \xi - \xi^* \tp{z} ),
		\tag{\theequation$_{[1,1]}$} 
		\label{e:(1,1)-A'SA'}
		\\
	\tp{a_{11}} a_{22} - \tp{a_{21}} a_{12} &= \psi_1 \psi_2 1_n - \psi_1 s_{xz}\, \xi^* z^* + \bar s s_{xz}\, z z^* + \psi_1\, \xi^* \xi - \bar s\, z \xi,
		\tag{\theequation$_{[1,2]}$} 
		\label{e:(1,2)-A'SA'}
		\\
	\tp{a_{12}} a_{22} - \tp{a_{21}} a_{12} &= 0,
		\tag{\theequation$_{[2,2]}$} 
		\label{e:(2,2)-A'SA'}
		\\
\intertext{respectively. The $(2,1)$-block of $\tp{A'} S A'$ is equal to the negative transpose of its $(1,2)$-block:}
	\tp{a_{12}} a_{21} - \tp{a_{22}} a_{11} &= -\tp{(\tp{a_{11}} a_{22} - \tp{a_{21}} a_{12})}.
		\tag{\theequation$_{[2,1]}$} 
\end{align}
}
\end{subequations}

Similarly, one obtains that the $(1,1)$-, $(1,2)$-, $(2,1)$-, and $(2,2)$-blocks of $\tp{A'} S A''$ are given by
{\allowdisplaybreaks
\begin{subequations}
\label{e:A'SA''}
\renewcommand{\theequation}{\theparentequation}
\begin{align}
	( \tp{a_{11}} b_{2} - \tp{a_{21}} b_1 ) {b_1}^* 
		&= - s_{xz} {\psi_1}^2 (\psi_2 - s_{xz} \overline{\xi z} + \norm{\xi}^2)\,\xi^* \bar \xi - \bar s {\psi_2}^2\, z \tp{z}
			\notag \\
		& \hspace{2.65em} + \psi_1 \psi_2 (\psi_2 - s_{xz} \overline{\xi z} + \norm{\xi}^2)\, \xi^* \tp{z} + \bar s s_{xz }\psi_1 \psi_2\, z \bar \xi, 
		\tag{\theequation$_{[1,1]}$} 
			\label{e:(1,1)-A'SA''}
			\\
	( \tp{a_{11}} b_2 - \tp{a_{21}} b_1 ) {b_2}^* 
		&= \psi_1 \Big( \psi_1 (\psi_2 - s_{xz} \overline{\xi z} + \norm{\xi}^2 )\, \xi^* \xi + \bar s _{xz} \psi_2\, z z^* 
			\notag \\
		& \hspace{3.2em} - \bar s \psi_2\, z \xi - s_{xz} \psi_1 (\psi_2 - s_{xz} \overline{\xi z} + \norm{\xi}^2)\, \xi^* z^* \Big),
		\tag{\theequation$_{[1,2]}$} 
			\label{e:(1,2)-A'SA''}
			\\
	( \tp{a_{12}} b_2 - \tp{a_{22}} b_1 ) {b_1}^* 
		&= \bar s \psi_1 ( -s_{xz} \psi_1\, \tp{\xi} \bar \xi - s_{xz} \psi_2\, \bar z \tp{z} + \psi_2\, \tp{\xi} \tp{z}+ {s_{xz}}^2 \psi_1\, \bar z \bar \xi),
		\tag{\theequation$_{[2,1]}$} 
			\label{e:(2,1)-A'SA''}
			\\
	( \tp{a_{12}} b_2 - \tp{a_{22}} b_1 ) {b_2}^* 
		&= \bar s {\psi_1}^2 ( \tp{\xi} \xi + {s_{xz}}^2 \bar z z^* - s_{xz} \bar z \xi - s_{xz} \tp{\xi} z^* ),
		\tag{\theequation$_{[2,2]}$} 
			\label{e:(2,2)-A'SA''}
\end{align}
\end{subequations}
}%
respectively.
Thus, it is immediate from \eqref{e:(2,2)-A'SA'} and \eqref{e:(2,2)-A'SA''} to see that the $(2,2)$-block of \eqref{e:Sp condition} vanishes identically.

Now, let us show that the right-hand side of \eqref{e:Sp condition} equals $S=\Big[ \begin{smallmatrix} 0 & 1_n \\[2pt] -1_n & 0 \end{smallmatrix} \Big]$ blockwise if and only if $a=0$.

Assume first that $a=0$.
Then, noting that $c^2=\tau^{-1}$ and $c c_1 = -(1/2) \tau^{-2}$, one finds that the $(1,1)$-block of \eqref{e:Sp condition} equals
\begin{equation}
\label{e:(1,1)-block}
\begin{aligned}
	&\frac12 (\psi_1 \psi_2)^{-1} 
				\Big( 2 (s_{xz} \overline{\xi z} - \norm{\xi}^2) + \psi_2 - s_{xz} \overline{\xi z} + \norm{\xi}^2 - \bar s s_{xz} \Big) (z \bar \xi - \xi^* \tp{z})
	 \\
	=\,& \frac12 (\psi_1 \psi_2)^{-1} \Big( \psi_2 - \norm{\xi}^2 - \bar s s_{xz} + s_{xz} \overline{\xi z} \Big) (z \bar \xi - \xi^* \tp{z})
	 \\
	=\,& \frac12 (\psi_1 \psi_2)^{-1} \Big( \psi_2 - \norm{\xi}^2 - \abs{s_{xz}}^2 \Big) (z \bar \xi - \xi^* \tp{z}),
\end{aligned}
\end{equation}
which vanishes identically by \eqref{e:psi1 and psi2}.

Similarly, one finds that the $(1,2)$-block of \eqref{e:Sp condition} equals
\begin{align*}
	& (\psi_1 \psi_2)^{-1} \Big( \psi_1 \psi_2 1_n + \psi_1 \xi^* (\xi - s_{xz} z^*) - \bar s z (\xi - s_{xz} z^*) \Big)
	\\
	&\hspace{0.5em}  + \frac12 (\psi_1 \psi_2)^{-2} \Big( 
		  -{\psi_1}^2 ( \psi_2 - s_{xz} \overline{\xi z} + \norm{\xi}^2)\, \xi^* (\xi - s_{xz} z^*) + \bar s \psi_1 \psi_2\, z (\xi - s_{xz} z^*) 
	\\
	& \hspace{6.85em} + \bar s \psi_1 \big( -s_{xz} \psi_1\, \xi^* (\xi - s_{xz} z^*) + \psi_2\, z (\xi - s_{xz} z^*) \big)
			\Big)
	\\
	&\,= 1_n + \frac12 {\psi_2}^{-2} \big( \psi_2 - \bar s s_{xz} + s_{xz} \overline{\xi z} - \norm{\xi}^2 \big)\, \xi^* (\xi - s_{xz} z^*)
	\\
	&\,= 1_n + \frac12 {\psi_2}^{-2} \big( \psi_2 - \abs{s_{xz}}^2 - \norm{\xi}^2 \big)\, \xi^* (\xi - s_{xz} z^*)
	\\
	&\,= 1_n
\end{align*}
identically, again by \eqref{e:psi1 and psi2}.
Note that the $(2,1)$-block of the right-hand side of \eqref{e:Sp condition} is equal to the negative transpose of its $(1,2)$-block, and thus equals $-1_n$ identically.

Conversely, assume that $a \ne 0$ in \eqref{e:cicci-flat sol}.
Then, one can easily verify that the $(1,1)$-block of \eqref{e:Sp condition} does not vanish identically.
Hence $A_\alpha \notin \Sp(2n,\C)$.
This completes the proof.
\end{proof}

Note that $\omega_I = \frac12 \rmd \rmd^c\, \tau^{1/2}$ if $f(\tau)=\tau^{-1/2}$.
Namely, a K{\"a}hler potential $\Phi$ for $(\ourZ,g,I)$ is given by $\Phi=\tau^{1/2}$; its local expression is given by
\begin{equation}
	\Phi|_{\pi_\lambda^{-1}(U_\alpha)} = ( \norm{z}^2 + 1 )^{1/2} ( \norm{\xi}^{2} + \abs{s - \xi z}^{2} )^{1/2}
\end{equation}
in terms of the local coordinates $(z,\xi)$.



%
%

\appendix

\section{
}
\label{s:det rank2 etc}

Throughout the appendix, let $A$ denote a matrix given by
\begin{equation}
\label{e:rank2 mat}
	A = Z_1 \tp{\,W_1} + Z_2 \tp{\,W_2} \in \Mat{n \times n}(\C)
\end{equation}
for $Z_i,W_i \in \C^n$ ($i=1,2$).
Note that the rank of $A$ is at most 2 if $n \geqsl 2$.

For the matrix $A$ given in \eqref{e:rank2 mat}, let us introduce a $2 \times 2$ matrix $\Lambda$ by
\begin{equation}
\label{e:2by2 mat}
	\Lambda = \begin{bmatrix} 
				\< W_1, Z_1 \>  & \< W_1, Z_2 \> \\[3pt] 
				\< W_2, Z_1 \>  & \< W_2, Z_2 \>
			  \end{bmatrix} \in \Mat{2 \times 2}(\C),
\end{equation}
where $\<\cdot,\cdot\>$ denotes the canonical bilinear form on $\C^n$.

\begin{lemma}
\label{l:det of 1+rank2mat}
For $u \in \C$, one has
\begin{equation}
\label{e:det of 1+rank2mat}
	\det( u 1_n + A) = u^{n-2} ( u^2 + u \trace{\Lambda} + \det \Lambda ).
\end{equation}
\end{lemma}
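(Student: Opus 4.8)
The plan is to exploit the rank-$\le 2$ structure of $A$ by factoring it as a product of an $n\times 2$ and a $2\times n$ matrix, and then to apply Sylvester's determinant identity to collapse the $n\times n$ determinant to a $2\times 2$ one. Concretely, I would set $Z := (Z_1\ Z_2) \in \Mat{n \times 2}(\C)$ and $W := (W_1\ W_2) \in \Mat{n \times 2}(\C)$, so that \eqref{e:rank2 mat} becomes $A = Z\,\tp{W}$. The crucial observation is that the $2\times 2$ matrix $\tp{W}Z$ has $(i,j)$-entry $\tp{W_i}Z_j = \< W_i, Z_j\>$, i.e.\ $\tp{W}Z = \Lambda$, exactly the matrix defined in \eqref{e:2by2 mat}.

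First I would dispose of the generic case $u \ne 0$. Writing $u 1_n + A = u\bigl(1_n + u^{-1} Z\,\tp{W}\bigr)$ and invoking Sylvester's identity $\det(1_n + PQ) = \det(1_m + QP)$ with $P = u^{-1}Z$, $Q = \tp{W}$ and $m = 2$, one gets
\[
	\det(u 1_n + A) = u^n \det\!\bigl(1_2 + u^{-1}\tp{W}Z\bigr) = u^n \det\!\bigl(1_2 + u^{-1}\Lambda\bigr) = u^{n-2}\det(u 1_2 + \Lambda).
\]
The remaining $2\times 2$ determinant is elementary, $\det(u 1_2 + \Lambda) = u^2 + u\trace\Lambda + \det\Lambda$, which yields \eqref{e:det of 1+rank2mat} for all $u \ne 0$. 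To remove this restriction I would simply note that both sides of \eqref{e:det of 1+rank2mat} are polynomials in $u$ that agree on the nonempty open set $\{u \ne 0\}$, hence coincide identically, in particular at $u = 0$.

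There is no serious obstacle here; the only points needing a little care are the correct bookkeeping of the factorization $A = Z\,\tp{W}$ together with the identification $\tp{W}Z = \Lambda$ (so that the bilinear form $\<\cdot,\cdot\>$ appears with the right index placement), and the passage from $u \ne 0$ to $u = 0$, which is cleanest as the polynomial-identity argument above rather than a limit. An alternative, more hands-on route avoids Sylvester's identity: since the column space of $A$ is contained in $V := \spanned\{Z_1, Z_2\}$, the subspace $V$ is $A$-invariant and $A$ acts as $0$ on $\C^n/V$, so in a basis adapted to $V$ the matrix $u 1_n + A$ is block upper triangular and $\det(u 1_n + A) = u^{n-2}\det(u 1_2 + A|_V)$ when $Z_1,Z_2$ are linearly independent; computing $A Z_j = \< W_1, Z_j\> Z_1 + \< W_2, Z_j\> Z_2$ shows that the matrix of $A|_V$ in the basis $(Z_1,Z_2)$ is $\tp{\Lambda}$, which shares the trace and determinant of $\Lambda$. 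This second approach, however, forces a separate treatment of the degenerate cases (such as $Z_1,Z_2$ dependent, or one of them zero), which is precisely why I would adopt the Sylvester route as the default.
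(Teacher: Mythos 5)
Your proof is correct, but it takes a genuinely different route from the paper. You factor $A = Z\,\tp{W}$ with $Z=(Z_1\ Z_2)$, $W=(W_1\ W_2) \in \Mat{n\times 2}(\C)$, observe $\tp{W}Z=\Lambda$, and invoke Sylvester's identity $\det(1_n+PQ)=\det(1_2+QP)$ to get $\det(u1_n+A)=u^{n-2}\det(u1_2+\Lambda)$ for $u\ne 0$, extending to $u=0$ by the polynomial-identity argument (which is indeed the clean way to handle that point, and is needed since Sylvester alone says nothing at $u=0$). The paper instead argues spectrally: since $\operatorname{rank} A \le 2$, the eigenvalue $0$ has algebraic multiplicity at least $n-2$, so $\det(u1_n+A)=u^{n-2}\bigl(u^2+u\trace A+\tfrac12((\trace A)^2-\trace(A^2))\bigr)$, and then identifies $\trace A = \trace\Lambda$ and $\tfrac12((\trace A)^2-\trace(A^2))=\det\Lambda$ via the contraction rule $Z_i\tp{\,W_j}\,Z_k\tp{\,W_l}=\langle W_j,Z_k\rangle\, Z_i\tp{\,W_l}$. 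The paper's computation is self-contained and handles all values of $u$ and all degeneracies of $Z_1,Z_2$ at once, at the cost of some trace bookkeeping; your Sylvester route eliminates that bookkeeping entirely and generalizes immediately to rank-$k$ perturbations $\det(u1_n+Z\tp{W})=u^{n-k}\det(u1_k+\tp{W}Z)$, which is arguably the more transparent structural explanation of the lemma. Your fallback argument via the invariant subspace $\spanned\{Z_1,Z_2\}$ is also sound (and correctly flags that $A|_V$ has matrix $\tp{\Lambda}$ in the basis $(Z_1,Z_2)$, which suffices since trace and determinant are transpose-invariant), and you rightly note it would require separate treatment of the degenerate cases — a gap your primary argument avoids. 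Both proofs implicitly use $n\ge 2$, which the paper's setup assumes.
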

\begin{proof}
First, note that
\begin{equation*}
	\det( u 1_n + A) = u^{n-2} \big( u^2 + u \trace{A} + \tfrac12( (\trace A)^2 - \trace{(A^2)}) \big)
\end{equation*}
since $A$ has its eigenvalues $\lambda_1,\lambda_2, \lambda_3 = \lambda_4 = \dots = \lambda_n=0$.
It is immediate to show that
\begin{equation*}
\label{e:trace Lambda & det Lambda}
\begin{aligned}
	\trace{\Lambda} &= \trace{A}, \\
	\det \Lambda &= \tfrac12 \big( (\trace A)^2 - \trace{(A^2)} \big)
\end{aligned}
\end{equation*}
if one notes $Z_i \tp{\,W_j} \, Z_k \tp{\,W_l} = \<W_j,Z_k\>\, Z_i \tp{\,W_l}$.
This completes the proof.
\end{proof}

Note in particular that one obtains $\det(1_n + A) = \det(1_2 + \Lambda)$ setting $u=1$ in \eqref{e:det of 1+rank2mat}.

\begin{lemma}
\label{l:1/(1+rank-two mat)}
Let $A \in \Mat{n \times n}(\C)$ and $\Lambda \in \Mat{2 \times 2}(\C)$ be matrices given by \eqref{e:rank2 mat} and \eqref{e:2by2 mat} respectively.
Then one has
\begin{equation}
\label{e:1/(1+rank-two mat)}
	(1_n + A)^{-1} = 1_n - \frac{1}{\det(1_2 + \Lambda)}
						\Big( (1+\lambda_{22})\, Z_1 \! \tp{\,W_1} - \lambda_{12}\, Z_1 \! \tp{\,W_2} - \lambda_{21}\, Z_2 \! \tp{\,W_1} + (1+\lambda_{11})\, Z_2 \! \tp{\,W_2} \Big)
\end{equation}
if $\det(1_2 + \Lambda) \ne 0$, where we set $\lambda_{ij}:=\<W_i, Z_j \>$ ($i,j=1,2$).

In particular, setting $Z_2=0$ or $W_2=0$, one has
\begin{equation}
\label{e:1/(1+rank-one mat)}
	(1_n + Z_1 \tp{\,W_1} )^{-1} = 1_n - \frac{1}{1+\<W_1,Z_1\>}\, Z_1 \tp{\,W_1}
\end{equation}
if $1+\< W_1, Z_1 \> \ne 0$.
\end{lemma}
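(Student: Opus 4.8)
The plan is to recognize $A$ as a rank-two factorization and thereby reduce the inversion of the $n \times n$ matrix $1_n + A$ to the inversion of a $2 \times 2$ matrix via the Sherman--Morrison--Woodbury identity. Concretely, I would collect the vectors into the $n \times 2$ matrices
\[
	\hat Z := (Z_1, Z_2), \qquad \hat W := (W_1, W_2),
\]
so that the definition \eqref{e:rank2 mat} reads $A = \hat Z\, \tp{\hat W}$, while by \eqref{e:2by2 mat} the $2 \times 2$ matrix $\Lambda = \tp{\hat W}\, \hat Z$ is exactly the reversed product, with entries $\lambda_{ij} = \< W_i, Z_j \>$.

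First I would invoke the Woodbury identity in the form
\[
	(1_n + \hat Z\, \tp{\hat W})^{-1} = 1_n - \hat Z\, (1_2 + \tp{\hat W}\, \hat Z)^{-1}\, \tp{\hat W} = 1_n - \hat Z\, (1_2 + \Lambda)^{-1}\, \tp{\hat W},
\]
which is valid precisely when $1_2 + \Lambda$ is invertible, i.e.~when $\det(1_2 + \Lambda) \ne 0$, matching the hypothesis. If one prefers a self-contained derivation, this identity is checked directly by multiplying $(1_n + A)$ against the claimed right-hand side of \eqref{e:1/(1+rank-two mat)} and repeatedly applying $Z_i \tp{W_j}\, Z_k \tp{W_l} = \< W_j, Z_k\>\, Z_i \tp{W_l} = \lambda_{jk}\, Z_i \tp{W_l}$, the same relation already used in the proof of Lemma \ref{l:det of 1+rank2mat}.

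Next I would compute $(1_2 + \Lambda)^{-1}$ explicitly from the adjugate formula for $2 \times 2$ matrices,
\[
	(1_2 + \Lambda)^{-1} = \frac{1}{\det(1_2 + \Lambda)} \begin{bmatrix} 1 + \lambda_{22} & -\lambda_{12} \\ -\lambda_{21} & 1 + \lambda_{11} \end{bmatrix},
\]
and then expand $\hat Z\, (1_2 + \Lambda)^{-1}\, \tp{\hat W}$ by restoring the columns $Z_1, Z_2$ and the rows $\tp{W_1}, \tp{W_2}$. Carrying out this $2 \times 2$ multiplication produces exactly the four dyadic terms $(1 + \lambda_{22})\, Z_1 \tp{W_1}$, $-\lambda_{12}\, Z_1 \tp{W_2}$, $-\lambda_{21}\, Z_2 \tp{W_1}$, and $(1 + \lambda_{11})\, Z_2 \tp{W_2}$ appearing in \eqref{e:1/(1+rank-two mat)}, which settles the main claim.

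Finally, for the special case \eqref{e:1/(1+rank-one mat)} I would set $Z_2 = 0$ (or $W_2 = 0$). Then $\lambda_{12} = \lambda_{22} = 0$ (resp.~$\lambda_{21} = \lambda_{22} = 0$), so $\det(1_2 + \Lambda) = 1 + \lambda_{11} = 1 + \< W_1, Z_1 \>$, and the two dyads carrying the vanishing factor $Z_2$ (resp.~$\tp{W_2}$) drop out, leaving $1_n - (1 + \< W_1, Z_1 \>)^{-1}\, Z_1 \tp{W_1}$. Since the argument is essentially bookkeeping once the rank-two factorization is in place, I do not anticipate any genuine conceptual obstacle; the only point requiring care is keeping the signs in the $2 \times 2$ adjugate straight, so that the cofactors $-\lambda_{12}$ and $-\lambda_{21}$ attach to the correct dyads $Z_1 \tp{W_2}$ and $Z_2 \tp{W_1}$ rather than being interchanged.
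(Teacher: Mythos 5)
Your proof is correct. The paper actually omits the proof of this lemma altogether (``The proof is straightforward, and is omitted''), so there is no argument of the author's to diverge from; your Woodbury-type derivation --- writing $A=\hat Z\,\tp{\hat W}$ with $\hat Z=(Z_1,Z_2)$, $\hat W=(W_1,W_2)$, noting $\Lambda=\tp{\hat W}\hat Z$, inverting $1_2+\Lambda$ by the $2\times 2$ adjugate, and expanding $\hat Z (1_2+\Lambda)^{-1} \tp{\hat W}$ into the four dyads --- is precisely the routine verification being alluded to, and its hypothesis $\det(1_2+\Lambda)\neq 0$ is the correct one, consistent with the identity $\det(1_n+A)=\det(1_2+\Lambda)$ obtained from Lemma \ref{l:det of 1+rank2mat} at $u=1$. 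Your bookkeeping checks out: the cofactor $-\lambda_{12}$ indeed attaches to $Z_1\tp{\,W_2}$ and $-\lambda_{21}$ to $Z_2\tp{\,W_1}$, and the specialization $Z_2=0$ (resp.\ $W_2=0$) kills $\lambda_{12},\lambda_{22}$ (resp.\ $\lambda_{21},\lambda_{22}$) so that $\det(1_2+\Lambda)=1+\lambda_{11}$, yielding \eqref{e:1/(1+rank-one mat)}; the fallback of multiplying $(1_n+A)$ against the right-hand side of \eqref{e:1/(1+rank-two mat)} using $Z_i\tp{\,W_j}\,Z_k\tp{\,W_l}=\lambda_{jk}\,Z_i\tp{\,W_l}$ would equally suffice and uses only the relation already employed in the proof of Lemma \ref{l:det of 1+rank2mat}.
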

\begin{proof}
The proof is straightforward, and is omitted.
\end{proof}


\bibliographystyle{amsplain}

%

\providecommand{\bysame}{\leavevmode\hbox to3em{\hrulefill}\thinspace}
\providecommand{\MR}{\relax\ifhmode\unskip\space\fi MR }
\providecommand{\MRhref}[2]{%
  \href{http://www.ams.org/mathscinet-getitem?mr=#1}{#2}
}
\providecommand{\href}[2]{#2}

\end{document}